\newtheorem{metatheorem}{metatheorem}[section]
\newtheorem{theorem}[metatheorem]{Theorem}
\newtheorem{lemma}[metatheorem]{Lemma}
\newtheorem{corollary}[metatheorem]{Corollary}
\newtheorem{itdefinition}[metatheorem]{Definition}
\newtheorem{itexample}[metatheorem]{Example}
\newtheorem{itremark}[metatheorem]{Remark}
\newtheorem{itquestion}[metatheorem]{Question}
\newtheorem{itproblem}[metatheorem]{Problem}
\newenvironment{example}%
    {\begin{itexample}\begin{rm}}{\end{rm}\end{itexample}}
\newenvironment{definition}%
    {\begin{itdefinition}\begin{rm}}{\end{rm}\end{itdefinition}}
\newenvironment{remark}%
    {\begin{itremark}\begin{rm}}{\end{rm}\end{itremark}}
    {\begin{itquestion}\begin{rm}}{\end{rm}\end{itquestion}}
    {\begin{itproblem}\begin{rm}}{\end{rm}\end{itproblem}}
\newcommand{\Xd}{(X, d)}
\newcommand{\M}{\mathcal{M}}
\newcommand{\Mzero}{\M_0}
\newcommand{\Mone}{\M_1}
\newcommand{\Moneplus}{\M_1^+}
\newcommand{\Mplus}{\M^+}
\newcommand{\MX}{\M(X)}
\newcommand{\MzeroX}{\M_0(X)}
\newcommand{\MoneX}{\M_1(X)}
\newcommand{\MoneplusX}{\M_1^+(X)}
\newcommand{\MplusX}{\M^+(X)}
\newcommand{\Ezero}{E_0}
\newcommand{\Tzero}{T_0}
\newcommand{\Ttildezero}{\widetilde{T}_0}
\newcommand{\Imu}{I(\mu)}
\newcommand{\Inu}{I(\nu)}
\newcommand{\Imunu}{I(\mu, \nu)}
\newcommand{\N}{\mathbb{N}}
\newcommand{\R}{\mathbb{R}}
\newcommand{\im}{\mathop{\rm im}\nolimits}
\newcommand{\sign}{\mathop{\rm sign}\nolimits}
\newcommand{\supp}{\mathop{\rm supp}\nolimits}
\newcommand{\mbar}{M}
\newcommand{\ip}[2]{( #1 \mid #2 )}
\newcommand{\bigip}[2]{\bigl( #1 \bigm| #2 \bigr)}
\newcommand{\RA}{$\Rightarrow$}
\newcommand{\LRA}{$\Leftrightarrow$}
\newcommand{\wstar}{\mbox{weak-$*$}}
\newcommand{\ts}{\textstyle}
\begin{document}

\title%[Distance Geometry.~I]%
{Distance Geometry in Quasihypermetric Spaces.~I}

\author{Peter Nickolas}
\address{School of Mathematics and Applied Statistics,
University of Wollongong, Wollongong, NSW 2522, Australia}
\email{peter\_\hspace{0.8pt}nickolas@uow.edu.au}
\author{Reinhard Wolf}
\address{Institut f\"ur Mathematik, Universit\"at Salz\-burg,
Hellbrunnerstrasse~34, A-5020 Salz\-burg, Austria}
\email{Reinhard.Wolf@sbg.ac.at}

\keywords{Compact metric space, finite metric space,
quasihypermetric space, metric embedding, signed measure,
signed measure of mass zero, spaces of measures,
distance geometry, geometric constant}

\subjclass[2000]{Primary 51K05; secondary 54E45, 31C45}

\date{}

\thanks{%
The authors are grateful for the financial
support and hospitality of the University of Salzburg and the Centre
for Pure Mathematics in the School of Mathematics and Applied 
Statistics at the University of Wollongong.}

\begin{abstract}
Let $\Xd$ be a compact metric space and let
$\MX$ denote the space of all finite signed Borel measures on~$X$.
Define $I \colon \MX \to \R$ by
\[
\Imu = \int_X \! \int_X d(x,y) \, d\mu(x) d\mu(y),
\]
and set
$
\mbar(X) = \sup \Imu,
$
where $\mu$ ranges over the collection of signed measures in $\MX$
of total mass~$1$.

The metric space $\Xd$ is \textit{quasihypermetric} if
for all $n \in \N$, all $\alpha_1, \ldots, \alpha_n \in \R$
satisfying
$\sum_{i=1}^n \alpha_i = 0$ and all $x_1, \ldots, x_n \in X$,
one has
$\sum_{i,j=1}^n \alpha_i \alpha_j d(x_i, x_j) \leq 0$.
Without the quasihypermetric property $\mbar(X)$ is infinite,
while with the property a natural semi-inner product structure
becomes available on $\Mzero(X)$,
the subspace of $\MX$ of all measures of total mass~$0$.
This paper explores: operators and functionals
which provide natural links between the metric structure
of~$\Xd$, the semi-inner product space structure of $\Mzero(X)$
and the Banach space~$C(X)$ of continuous real-valued functions
on~$X$; conditions equivalent to the quasihypermetric property;
the topological properties of $\Mzero(X)$ with the topology
induced by the semi-inner product, and especially
the relation of this topology to the \wstar{} topology
and the measure-norm topology on $\Mzero(X)$;
and the functional-analytic properties of $\Mzero(X)$
as a semi-inner product space, including the question
of its completeness.
A~later paper [Peter Nickolas and Reinhard Wolf,
\emph{Distance Geometry in Quasihypermetric Spaces.~II}]
will apply the work of this paper to a detailed analysis
of the constant $\mbar(X)$.
\end{abstract}

\maketitle

\section{Introduction}
\label{Introduction}

Let $\Xd$ be a compact metric space and let
$\MX$ denote the space of all finite signed Borel measures on~$X$.
Define $I \colon \MX \to \R$ by
\[
\Imu = \int_X \! \int_X d(x,y) \, d\mu(x) d\mu(y),
\]
and set
\[
\mbar(X) = \sup \Imu,
\]
where $\mu$ ranges over $\Mone(X)$,
the collection of signed measures in $\MX$ of total mass~$1$.
Our main aim in this paper and its sequels \cite{NW2} and~\cite{NW3}
is to investigate the properties of the geometric constant~$\mbar(X)$.

The so-called quasihypermetric property (for the definition, see below)
turns out to play an essential role in our analysis.
Indeed, we show that if $\Xd$ does not have the quasihypermetric property,
then $\mbar(X)$ is infinite, and, with the exception
of some general results, our attention is therefore mostly
confined to quasihypermetric spaces.
When $\Xd$ is a quasihypermetric space,
we introduce a semi-inner product on $\Mzero(X)$,
the subspace of all measures in $\MX$ of total mass~$0$.
The resulting semi-inner product space has interesting properties
in its own right, and is our fundamental tool for studying
the properties of~$\mbar(X)$.

In this paper, we focus largely on the analysis of this
semi-inner product space, and then in \cite{NW2} and~\cite{NW3}
we use the framework that this provides for a comprehensive
discussion of the properties of~$\mbar$.
Specifically, we explore in this paper:
\begin{enumerate}
\item[(1)]
the properties of several operators and functionals
which provide natural links between the metric structure
of~$\Xd$, the semi-inner product space structure of $\Mzero(X)$
and the Banach space $C(X)$ of continuous real-valued functions
on~$X$,
\item[(2)]
conditions equivalent to the quasihypermetric property,
\item[(3)]
the topological properties of $\Mzero(X)$ with the topology
induced by the semi-inner product, and especially
the relation of this topology to the \wstar{} topology
and the measure-norm topology on $\Mzero(X)$,
\item[(4)]
the functional-analytic properties of $\Mzero(X)$
as a semi-inner product space,
especially under the condition that $\mbar(X)$ is finite, and
\item[(5)]
the question of the completeness of $\Mzero(X)$ as a semi-inner product space.
\end{enumerate}
These items describe respectively the contents
of the five main sections of the paper.

As remarked above, the sequels \cite{NW2} and~\cite{NW3}
to this paper pursue in detail the applications of our work
here to the study of the constant $\mbar(X)$.
Further papers are also planned, in which we will study a number of
questions related to the issues raised in the first
two papers. These include the behaviour of~$\mbar$ in several
specific classes of metric spaces and the relation of~$\mbar$
to other constants appearing in distance geometry.

\subsection{Definitions and notation}
\label{chapter1}

Let $\Xd$ (abbreviated when possible to~$X$) be a compact metric space. 
The diameter of~$X$ is denoted by $D(X)$.

We denote by $C(X)$ the Banach space of all real-valued continuous functions
on~$X$ equipped with the usual supremum norm.
Further,
\begin{itemize}
\item
$\MX$ denotes the space of all finite signed Borel measures on~$X$,
\item
$\Mzero(X)$ denotes the subspace of $\MX$
consisting of all measures of total mass~$0$,
\item
$\Mone(X)$ denotes the affine subspace of $\MX$
consisting of all measures of total mass~$1$,
\item
$\Mplus(X)$ denotes the set of all positive measures in~$\MX$, and
\item
$\Moneplus(X)$ denotes the intersection of $\Mplus(X)$ and $\Mone(X)$,
the set of all probability measures on~$X$.
\end{itemize}
The support of $\mu \in \MX$ is denoted by $\supp(\mu)$.
For $x \in X$, we denote by $\delta_x \in \Moneplus(X)$
the point measure at~$x$.

Recall that the \wstar{} topology on $\MX$
is characterized by the fact that a net $\{ \mu_{\alpha} \}$
in $\MX$ converges to $\mu \in \MX$ if and only if
$\int_X f \, d\mu_{\alpha} \to \int_X f \, d\mu$ for all
$f \in C(X)$.

Each $\mu \in \MX$ has a Hahn--Jordan decomposition,
allowing us to write either $\mu = \mu^+ - \mu^-$,
where $\mu^+, \mu^- \in \Mplus(X)$ and
$\supp(\mu^+) \cap \supp(\mu^-) = \emptyset$, or, equivalently,
$\mu = \alpha\mu_{1} - \beta \mu_2$, where $\mu_1, \mu_2 \in \Moneplus(X)$,
$\alpha, \beta \geq 0$ and $\supp(\mu_1) \cap \supp(\mu_2) = \emptyset$.
We denote by $\| \cdot \|_\M$ the measure norm on $\MX$.
Since our standing assumption will be that $X$ is compact,
we have the simple expression
$\| \mu \|_\M = \mu^+(X) + \mu^-(X) = \alpha + \beta$,
for $\mu$ as above.

The Riesz representation theorem tells us that $\MX$,
equipped with the measure norm, is a Banach space
isometrically isomorphic to the space $C(X)'$, the dual space of $C(X)$.
In the following, we will freely identify signed Borel measures
with continuous linear functionals, writing as convenient either
$\mu(f)$ or $\int_X f \, d\mu$ when $f \in C(X)$ and $\mu \in \MX$.

\bigskip

Two functionals on measures will play a central role
in this paper. If $\Xd$ is a compact metric space,
then for $\mu, \nu \in \MX$, we set
\[
\Imunu = \int_X \! \int_X d(x, y) \, d\mu(x) d\nu(y),
\]
and then we set
\[
\Imu = I(\mu, \mu).
\]
We also make use of the linear functionals $J(\mu)$ on $\MX$,
defined for each $\mu \in \MX$ by $J(\mu)(\nu) = \Imunu$
for all $\nu \in \MX$.
The functional $I(\cdot, \cdot)$ is obviously bilinear on $\MX \times \MX$,
and this immediately gives identities such as
\[
I(\mu \pm \nu) = \Imu + \Inu \pm 2 \Imunu,
\]
which we will use frequently.
It is useful to note that $I(\delta_x) = 0$.

For $\mu \in \MX$, we define the function~$d_\mu$ by
\[
d_\mu(x) = \int_X d(x, y) \, d\mu(y)
\]
for $x \in X$. Of course, $d_\mu \in C(X)$ for all $\mu$,
and we define a linear map
\[
T \colon \MX \to C(X)
\]
by setting $T(\mu) = d_\mu$ for $\mu\in \MX$.
Note that we may express the functional $I(\cdot, \cdot)$
in terms of the functions~$d_\mu$:
\[
\Imunu
 = \int_X d_\mu \, d\nu
 = \int_X d_\nu \, d\mu
 = I(\nu, \mu).
\]
We also make use of the linear map $\Tzero$, which is the restriction
of~$T$ to the subspace $\MzeroX$.

For the compact metric space $\Xd$, we define
\[
M^+(X) = \sup \bigl\{ \Imu : \mu \in \Moneplus(X) \bigr\}
\]
and
\[
\mbar(X) = \sup \bigl\{ \Imu: \mu \in \Mone(X) \bigr\}.
\]
The geometric constant $\mbar(X)$ is our main focus in this paper,
but use will be made from time to time of $M^+(X)$.

\bigskip

A metric space $\Xd$ is called \textit{quasihypermetric}
if for all $n \in \N$, all $\alpha_1, \ldots, \alpha_n \in \R$ satisfying
$\sum_{i=1}^n \alpha_i = 0$ and all $x_1, \ldots, x_n \in X$, we have
\[
\sum_{i,j=1}^n \alpha_i \alpha_j d(x_i, x_j) \leq 0.
\]

\subsection{Connections with other work}
\label{sect:pottheory}

The geometric constant~$\mbar(X)$ appeared for the first time
in the work of Alexander and Stolarsky~\cite{AandS},
who dealt with the case when $X$ is a compact subset
of euclidean space and $d$ is the usual euclidean metric.
They showed that in this case $\mbar(X)$ is always finite,
and that when the subset~$X$ itself is finite, the supremum $\mbar(X)$
is achieved for some signed measure $\mu \in \Mone(X)$,
allowing the explicit computation of $\mbar(X)$.
Further papers by Alexander, especially \cite{Alex5} and~\cite{Alex1},
carried the analysis of the euclidean case further.
Because euclidean space is quasihypermetric,
the references just cited do not give explicit emphasis to the role
of the quasihypermetric property and have little need for
the development of a general framework for the analysis.

Our interest is in the analysis of~$\mbar(X)$
in a general compact metric space~$X$,
and our primary aim in the present paper is to develop
the framework mentioned and in particular to make
explicit the role of the quasihypermetric property.
Indeed, the constant~$\mbar(X)$, which is ultimately our main interest,
is discussed in this paper only as far as is needed to do this,
and a detailed analysis of~$\mbar(X)$ itself will be taken up
in \cite{NW2}, \cite{NW3} and later papers.

Some of the ideas developed here have obvious parallels
with the ideas of potential theory. In modern accounts
of classical potential theory (see Landkof~\cite{Lan}, for example),
one deals with a space~$X$ which is a suitable region
in a euclidean space and a kernel $k(x, y)$
on $X \times X$ which is typically of the form $\|x - y\|^\alpha$
for certain values of $\alpha < 0$ which depend on the dimension
of the euclidean space (here, $\| \cdot \|$ denotes the euclidean norm).
Energy integrals $I_k(\mu) = \int \!\! \int k(x, y) \, d\mu(x) d\mu(y)$
and potentials $d_{k,\mu}(x) = \int k(x, y) \, d\mu(y)$
are then defined for signed measures~$\mu$,
paralleling our definitions above, and one seeks, for example,
to find measures~$\mu$ which minimize~$I_k(\mu)$ or which yield
a constant potential~$d_{k,\mu}$.

The classical framework may be generalized in several ways
(see Fuglede~\cite{Fug}): the space~$X$ may be replaced
by a (locally) compact Hausdorff topological space
and quite general classes of kernels~$k$ can be considered.
As discovered already by Bj{\"o}rck~\cite{Bjo}, the theory takes on
a significantly different character even in the euclidean case
if the kernel has the non-classical form $\|x - y\|^\alpha$
for $\alpha > 0$, since one then naturally seeks to maximize
rather than to minimize the corresponding generalized energy integral.
Moreover, if $X$ is not a euclidean domain,
then standard analytical techniques, especially that
of the Fourier transform, are no longer available.

For these reasons and others (relating, for example,
to the quasihypermetric constraint), one cannot expect to find
precise parallels between our results and arguments
and those of either classical or generalized potential theory,
even though the theories have global features in common
at many points.

Some of the ideas in this paper can be generalized straightforwardly
along the lines suggested by Fuglede's work. The reader can verify
easily, for example, that analogues of a number of our results
hold in the case of a continuous, symmetric kernel~$k$
on a compact Hausdorff space~$X$.
Using Fuglede's work, Farkas and Rev\'{e}sz \cite{FR1,FR2}
recently carried out a generalized potential-theoretic analysis
of the so-called rendezvous number,
another constant appearing in distance geometry
(see, for example, \cite{Stadje}, \cite{Gross}, \cite{MandN},
\cite{CMY}, \cite{Yost1}, \cite{Wol1}, \cite{Wol2}, \cite{Wol3}
and~\cite{Hin1}).

Despite the possibility of such generalization, however,
our discussion here takes place exclusively in the setting
of a compact metric space~$X$ and its metric~$d$,
because our motivation is essentially geometric:
the analysis of the geometric properties of~$X$ and related structures,
and especially the geometric constant~$\mbar(X)$.

\section{Properties of the Mappings $T$ and $I$}
\label{sect:propsofTandI}

Recall from section~\ref{chapter1} that when $\Xd$ is a compact metric space,
$T \colon \MX \to C(X)$ is the linear map defined by $T(\mu) = d_\mu$
for $\mu \in \MX$. We denote the image of~$T$ by $\im T$.

\begin{theorem}
\label{newtheorem}
Let $\Xd$ be a compact metric space. Then $\dim(\im T)$ is finite
if and only if\/ $X$ is finite.
\end{theorem}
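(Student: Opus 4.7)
The backward direction is immediate: if $X$ is a finite set with $n$ points, then $\MX$ is spanned by the $n$ Dirac measures and so has dimension $n$, whence $\dim(\im T) \leq n$.

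For the forward direction I would argue by contraposition, showing that if $X$ is infinite then $\im T$ cannot be finite-dimensional. Since $X$ is infinite and compact, Bolzano--Weierstrass furnishes an accumulation point $y_\infty \in X$, and I pick a sequence $\{y_n\} \subset X \setminus \{y_\infty\}$ with $y_n \to y_\infty$. The crux of the argument is to form the normalized differences
\[
f_n = \frac{d(\cdot, y_n) - d(\cdot, y_\infty)}{d(y_n, y_\infty)}.
\]
Because $\im T$ is a linear subspace and $d(\cdot, y) = T(\delta_y)$, each $f_n$ lies in $\im T$. The reverse triangle inequality yields $\|f_n\|_\infty \leq 1$, while direct evaluation gives $f_n(y_\infty) = 1$ and $f_n(y_n) = -1$.

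Now I would exploit the finite-dimensionality assumption: the closed unit ball of $\im T$ is compact, so after passing to a subsequence $\{f_{n_k}\}$ converges uniformly on $X$ to some continuous $f \in \im T$. Uniform convergence at the fixed point $y_\infty$ gives $f(y_\infty) = \lim_k f_{n_k}(y_\infty) = 1$, whereas the standard estimate
\[
|f_{n_k}(y_{n_k}) - f(y_\infty)| \leq \|f_{n_k} - f\|_\infty + |f(y_{n_k}) - f(y_\infty)|,
\]
combined with $y_{n_k} \to y_\infty$ and continuity of $f$, forces $f(y_\infty) = \lim_k f_{n_k}(y_{n_k}) = -1$. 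This contradiction completes the proof.

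The main obstacle, and really the only nontrivial step, is spotting the normalization. The raw functions $d(\cdot, y_n)$ merely converge uniformly to $d(\cdot, y_\infty)$, which is entirely consistent with finite-dimensionality. Dividing by the vanishing scalar $d(y_n, y_\infty)$ produces a bounded sequence that rigidly holds the values $\pm 1$ at the colliding pair $y_\infty$ and $y_n$, and it is this rigidity which compactness in a finite-dimensional space cannot reconcile with uniform convergence.
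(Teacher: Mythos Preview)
Your proof is correct and considerably more elementary than the paper's. The paper proceeds as follows: assuming $\dim(\im T)=n$, it uses Lemma~\ref{newlemma} to pick $x_1,\dots,x_n\in X$ with $\im T=[i(x_1),\dots,i(x_n)]$, expands each $i(x)$ in this basis with coefficients $\lambda_1(x),\dots,\lambda_n(x)$, and rewrites the metric as a bilinear form
\[
d(x,y)=\bigip{A(\lambda(x)-\lambda(y))}{\lambda(x)-\lambda(y)}
\]
for a fixed $n\times n$ matrix~$A$. Cauchy--Schwarz together with the boundedness of the coordinate functionals on the finite-dimensional space $\im T$ (and the isometry in part~(1) of Lemma~\ref{newlemma}) then yields $d(x,y)\leq C\,d(x,y)^2$ for a constant $C$, forcing $d(x,y)\geq 1/C$ for all distinct $x,y$; compactness finishes. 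Thus the paper actually obtains a quantitative uniform lower bound on all nonzero distances, which is more than the bare finiteness you prove. Your argument, by contrast, bypasses Lemma~\ref{newlemma}, the matrix representation, and the Lipschitz estimates entirely: the normalized differences $f_n$ and compactness of the unit ball in a finite-dimensional space give the contradiction directly. What you trade away is the explicit gap estimate; what you gain is a much shorter, self-contained argument that needs no auxiliary lemma and no Hahn--Banach.
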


For the proof of the theorem, we need the following lemma.
If $S$ is any subset of a linear space, we write $[S]$
(omitting set braces if appropriate) to denote the linear hull of~$S$.

\begin{lemma}
\label{newlemma}
Let $\Xd$ be a compact metric space. Then we have the following.
\begin{enumerate}
\addtolength{\itemsep}{1mm}
\item
If $i \colon X \to C(X)$ is the function defined by
$i(x) = d_{\delta_x}$ for $x \in X$,
then $\| i(x) - i(y) \|_\infty = d(x, y)$
for all $x, y \in X$.
\item
$\overline{\im T} = \overline{[i(x) : x \in X]}$.
\end{enumerate}
\end{lemma}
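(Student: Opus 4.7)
Part (1) is a direct computation. For $x, y, z \in X$, we have $i(x)(z) = \int_X d(z, w) \, d\delta_x(w) = d(z, x)$, and similarly $i(y)(z) = d(z, y)$. Hence $(i(x) - i(y))(z) = d(z, x) - d(z, y)$, which by the triangle inequality has absolute value at most $d(x, y)$, giving $\|i(x) - i(y)\|_\infty \leq d(x, y)$. Evaluating at $z = x$ (or $z = y$) yields $|{-}d(x, y)| = d(x, y)$, so equality holds.

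For part (2), the inclusion $\overline{[i(x) : x \in X]} \subseteq \overline{\im T}$ is immediate, since $i(x) = T(\delta_x) \in \im T$ and linearity of $T$ gives $\sum_j \alpha_j \, i(x_j) = T\bigl(\sum_j \alpha_j \delta_{x_j}\bigr) \in \im T$. The nontrivial direction is $\overline{\im T} \subseteq \overline{[i(x) : x \in X]}$; equivalently, given $\mu \in \MX$, we must approximate $T(\mu) = d_\mu$ uniformly on~$X$ by functions of the form $\sum_j \alpha_j \, i(x_j)$, which are precisely $T(\sigma)$ for finitely supported signed measures~$\sigma$. The plan is to approximate $\mu$ by a sequence $\mu_n$ of finitely supported signed measures, uniformly bounded in measure norm and converging weak-$*$ to~$\mu$, and then to upgrade the resulting pointwise convergence $d_{\mu_n}(x) \to d_\mu(x)$ to uniform convergence in~$x$. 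The construction of $\mu_n$ is standard: partition $X$ into Borel sets $U_1^{(n)}, \ldots, U_{k_n}^{(n)}$ of diameter less than $1/n$, pick $x_j^{(n)} \in U_j^{(n)}$, and set $\mu_n = \sum_{j} \mu\bigl(U_j^{(n)}\bigr) \delta_{x_j^{(n)}}$; then $\|\mu_n\|_\M \leq \|\mu\|_\M$, and a uniform continuity argument shows $\int f \, d\mu_n \to \int f \, d\mu$ for every $f \in C(X)$.

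The key step, which I expect to be the main (mild) obstacle, is the passage from weak-$*$ to uniform convergence of the potentials. Writing $d_\mu(x) = \mu(i(x))$ and similarly for~$\mu_n$, one has
\[
\|d_\mu - d_{\mu_n}\|_\infty = \sup_{x \in X} \bigl|(\mu - \mu_n)(i(x))\bigr| = \sup_{f \in i(X)} \bigl|(\mu - \mu_n)(f)\bigr|.
\]
By part~(1), the map $i \colon X \to C(X)$ is an isometry, so $i(X)$ is a compact subset of~$C(X)$. The claim then reduces to the general fact that a uniformly norm-bounded, weak-$*$ convergent sequence in $C(X)'$ converges uniformly on compact subsets of $C(X)$. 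This is proved by a standard $\epsilon/3$ argument: cover the compact set $i(X)$ by finitely many $\|\cdot\|_\infty$-balls $B(f_\ell, \epsilon/(6M))$, where $M$ is the uniform bound on $\|\mu\|_\M$ and $\|\mu_n\|_\M$, use weak-$*$ convergence at each $f_\ell$, and combine with the bound $|(\mu - \mu_n)(f - f_\ell)| \leq 2M \|f - f_\ell\|_\infty$. This yields $d_{\mu_n} \to d_\mu$ in $C(X)$, and since each $d_{\mu_n} \in [i(x) : x \in X]$ by construction, we conclude $d_\mu \in \overline{[i(x) : x \in X]}$, completing the proof.
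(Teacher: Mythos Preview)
Your proof is correct, and for part~(2) it takes a genuinely different route from the paper's. The paper argues by contradiction via Hahn--Banach: if some $d_\mu$ were not in $\overline{[i(x) : x \in X]}$, a separating functional $\nu \in \MX$ would satisfy $\nu(i(x)) = 0$ for all~$x$, i.e.\ $d_\nu \equiv 0$, yet $\nu(d_\mu) = 1$; the symmetry $\nu(d_\mu) = \mu(d_\nu)$ then gives the contradiction $0 = 1$. Your argument is instead direct and constructive: you approximate $\mu$ by finitely supported $\mu_n$ with $\|\mu_n\|_\M \leq \|\mu\|_\M$ and $\mu_n \to \mu$ weak-$*$, and then use the compactness of $i(X) \subseteq C(X)$ (from part~(1)) to upgrade weak-$*$ convergence to uniform convergence of $d_{\mu_n} \to d_\mu$. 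The paper's approach is shorter and exploits the symmetry of~$I(\cdot,\cdot)$ in a way that foreshadows later arguments (e.g.\ Theorem~\ref{2.18}); your approach avoids Hahn--Banach entirely and yields slightly more, namely that each $d_\mu$ is approximable by potentials of finitely supported measures with controlled total variation.
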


\begin{proof}
Since $d_{\delta_x}(y) = d(x, y)$ for all $x, y \in X$,
the first statement is an easy consequence of the triangle inequality.

\smallskip
To prove the second statement, assume that $\mu \in \M(X)$
is such that $d_\mu \notin \overline{[i(x) : x \in X]}$.
Then by the Hahn--Banach theorem, there exists $\nu \in \M(X)$
such that $\nu(d_\mu) = 1$ and $\nu(i(x)) = 0$ for all $x \in X$.
But then $d_\nu(x) = 0$ for all $x \in X$,
while $\mu(d_\nu) = \nu(d_\mu) = 1$, a contradiction.
Therefore, $\overline{\im T} \subseteq \overline{[i(x) : x \in X]}$,
and since the reverse inclusion clearly holds, the proof is complete.
\end{proof}

\begin{proof}[Proof of Theorem~\ref{newtheorem}]
If $X$ is finite, then of course $\dim(\im T)$ is finite.

Let us assume that $\dim(\im T) = n$ for some integer~$n \geq 0$.
It is easy to see that if $n = 0$, then $X$ is a one-point space,
so we can assume that $n \geq 1$.
By Lemma~\ref{newlemma}, there are $x_1, \ldots, x_n \in X$
such that $\im T = [i(x_1), \ldots, i(x_n)]$,
and so for every $x \in X$, there exists a unique
$\lambda(x) = (\lambda_1(x), \ldots, \lambda_n(x)) \in \R^n$
such that $i(x) = \lambda_1(x) i(x_1) + \cdots + \lambda_n(x) i(x_n)$.
It follows that $d(x,y) = \sum_{i=1}^n \lambda_i(x) d(x_i, y)$
for all $x, y \in X$, and so we have
\[
d(x, y)
 = d(y, x)
 = \sum_{j=1}^n \lambda_j(y) d(x, x_j)
 = \sum_{i,j=1}^n \lambda_i(x) \lambda_j(y) d(x_i, x_j).
\]

Define an $n \times n$ matrix $A = (a_{i,j})$
by setting $a_{i,j} = -(1/2) d(x_i, x_j)$ for all $i$ and~$j$,
and view~$A$ as a bounded linear operator on the euclidean
space $\R^n$. It follows that
\[
d(x, y) = \bigip{A(\lambda(x) - \lambda(y))}{\lambda(x) - \lambda(y)}.
\]
Now by the Cauchy--Schwarz inequality, we have
\[
d(x, y)
 \leq \bigl\| A(\lambda(x) - \lambda(y) )\bigr\|
      \cdot
      \bigl\| \lambda(x) - \lambda(y) \bigr\|
 \leq \| A \| \cdot \bigl\| \lambda(x) - \lambda(y) \bigr\|^2.
\]
To estimate $\| \lambda(x) - \lambda(y) \|$,
define $\phi_j \colon \im T \to \R$ by setting
\[
\phi_j \Bigl( \sum_{i=1}^n \beta_i i(x_i) \Bigr) := \beta_j
\]
for $j = 1, \ldots, n$. Since $\phi_j$ is linear
and $\dim(\im T) = n < \infty$, we know that
$\phi_j$ is bounded. Hence, for all $x, y \in X$,
we have
\begin{eqnarray*}
\bigl| \lambda_j(x) - \lambda_j(y) \bigr|
 & = &
\bigl| \phi_j(i(x)) - \phi_j(i(y)) \bigr| \\
 & = &
\bigl| \phi_j(i(x) - i(y)) \bigr| \\
 & \leq &
\| \phi_j\| \cdot \bigl\| i(x) - i(y) \bigr\|_\infty \\
 & = &
\| \phi_j\| \cdot d(x, y),
\end{eqnarray*}
by Lemma~\ref{newlemma}.
Hence for $K := \max_j \| \phi_j \|$, we have
$| \lambda_j(x) - \lambda_j(y) | \leq K \cdot d(x, y)$
for all $x, y \in X$ and for all $j = 1, \ldots, n$,
and therefore
$\| \lambda(x) - \lambda(y) \|^2 \leq n K^2 d(x, y)^2$
for all $x, y \in X$. Combining our inequalities, we obtain
$d(x, y) \leq \| A \| n K^2 d(x, y)^2$ for all $x, y \in X$,
and hence we have $d(x, y) \geq 1/(n \| A \| K^2)$
for all distinct $x, y \in X$.
Since $X$ is compact, we conclude that $X$ is finite.
\end{proof}

\begin{remark}
\label{newremark}
We note that Theorem~\ref{newtheorem} does not in general hold
if the metric property of~$d$ is weakened.
For $n \geq 2$, let $S^{n-1}$ denote the euclidean unit sphere in~$\R^n$,
let $X$ be a compact subset of~$S^{n-1}$ and let $d(x, y) = \|x - y\|^2$
for all $x, y \in X$, where $\| \cdot \|$ is the euclidean norm.
Also, for $k = 1, \ldots, n$, define $f_k \in C(X)$
by $f_k(x) := \| x - e_k \|^2$, where $e_k$ denotes
the $k$th canonical unit vector in~$\R^n$.
Then defining $T$ and~$i$ formally as earlier
(though $d$ now may not be a metric),
we see easily that for each $x = (x_1, \ldots, x_n) \in S^{n-1}$
\[
i(x)
 =
2 \Bigl(1 - \sum_{k=1}^n x_k \Bigr) \cdot \underline{1}
 + \sum_{k=1}^n x_k f_k,
\]
where $\underline{1}$ denotes the constant function
$\underline{1}(y) := 1$ for all $y \in X$.
But it is clear that Lemma~\ref{newlemma} part~(2) still holds,
and so we have
$\im T \subseteq [\underline{1}, f_1, \dots, f_n]$,
and it follows that $\dim(\im T) \leq n+1 < \infty$.
While the function~$d$ is non-negative and symmetric,
and $d(x, y) = 0$ if and only if $x = y$, however,
it follows from a theorem of Danzer and Gr{\"u}nbaum~\cite{D&G}
that $d$ cannot satisfy the triangle inequality if $X$ has
more than $2^n$ elements. Thus the forward implication of Theorem~\ref{newtheorem} fails for every infinite choice of~$X$.
\end{remark}

\begin{theorem}
\label{2.18}
Let $\Xd$ be a compact metric space.
Then $T$ is injective if and only if\/ $\im T$ is dense in $C(X)$.
\end{theorem}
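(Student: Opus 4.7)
The plan is to exploit the symmetry identity $\mu(d_\nu) = \nu(d_\mu)$ noted earlier in the text, which says that $T$ is self-adjoint when viewed through the pairing between $\MX$ and $C(X)$. Both directions will fall out of a single Hahn--Banach argument applied to $\im T \subseteq C(X)$.

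For the forward direction, I would argue by contraposition. Suppose $\im T$ is not dense in $C(X)$. By the Hahn--Banach theorem, there is a nonzero continuous linear functional on $C(X)$ vanishing on $\overline{\im T}$; by Riesz representation this functional is a nonzero $\nu \in \MX$ with $\nu(d_\mu) = 0$ for every $\mu \in \MX$. Specialising to point masses $\mu = \delta_x$, and using the symmetry $\nu(d_{\delta_x}) = \delta_x(d_\nu) = d_\nu(x)$, we conclude $d_\nu(x) = 0$ for every $x \in X$, i.e.\ $T(\nu) = 0$. Hence $T$ fails to be injective.

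For the reverse direction, suppose $\im T$ is dense in $C(X)$ and that $T(\mu) = 0$ for some $\mu \in \MX$. Then $d_\mu \equiv 0$, so for every $\nu \in \MX$ we have $\mu(d_\nu) = \nu(d_\mu) = 0$ by the same symmetry. Thus the continuous linear functional $\mu$ vanishes on the dense subspace $\im T$ of $C(X)$, so $\mu = 0$, giving injectivity.

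No real obstacle is visible here; the entire content is the self-duality identity together with a standard Hahn--Banach separation. The only small point to be careful about is identifying $\delta_x(d_\nu)$ with $d_\nu(x)$ cleanly in the forward direction, which is automatic from the definition of the point mass $\delta_x$.
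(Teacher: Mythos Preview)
Your argument is correct and essentially identical to the paper's: both directions hinge on the symmetry $\mu(d_\nu)=\nu(d_\mu)$ combined with Hahn--Banach. The only cosmetic difference is that in the forward direction you specialise to point masses $\delta_x$ to conclude $d_\nu\equiv 0$, whereas the paper invokes the full symmetry to get $\nu(d_\mu)=0$ for all $\nu$ and then deduces $d_\mu=0$; these are the same step.
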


\begin{proof}
Assume that $\im T$ is not dense in $C(X)$.
Then by the Hahn--Banach theorem, there exists $\mu \neq 0$
in $\MX$ such that $\mu(d_\nu) = 0$ for all $\nu \in \MX$.
Therefore $0 = \mu (d_\nu) = \nu (d_\mu)$ for all $\nu \in \MX$,
and so $d_\mu = 0$. Hence $T$ is not injective.
On the other hand, assume that $\im T$ is dense in $C(X)$
and that $d_\mu = 0$ for some $\mu \in \MX$.
Now $d_\mu = 0$ implies $\nu (d_\mu) = 0$
for all $\nu \in \MX$, and therefore $0 = \nu (d_\mu) = \mu (d_\nu)$
for all $\nu \in \MX$. Then, since $\im T$ is dense in $C(X)$,
we have $\mu = 0$, and $T$ is injective.
\end{proof}

We now discuss the continuity of the functionals $I(\cdot)$ and
$I(\cdot, \cdot)$ on $\MX$ and $\MX \times \MX$, and on various subsets.
We omit the straightforward proofs of the first two results,
the second of which generalizes parts of the statement and proof
of Lemma~1 of~\cite{Wol1}.

\begin{theorem}
\label{sepcont}
If $\Xd$ is a compact metric space and $\MX$ is given the \wstar{}
topology, then the functional $I(\cdot, \cdot)$ on $\MX \times \MX$
is separately continuous in each variable.
\end{theorem}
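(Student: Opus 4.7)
The plan is to reduce the claim to the defining property of the weak-$*$ topology on $\MX$, using the representation of $I(\cdot,\cdot)$ in terms of the continuous functions $d_\mu$ that was derived in Section~\ref{chapter1}.

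Fix $\mu \in \MX$; I want to show that the map $\nu \mapsto \Imunu$ is weak-$*$ continuous. From the computation
\[
\Imunu = \int_X d_\mu \, d\nu = \nu(d_\mu)
\]
recorded earlier, and the fact that $d_\mu = T(\mu) \in C(X)$, the functional $\nu \mapsto \Imunu$ is exactly the evaluation of the weak-$*$ continuous linear functional associated with the fixed test function $d_\mu \in C(X)$. By the characterization of the weak-$*$ topology on $\MX \cong C(X)'$ recalled in Section~\ref{chapter1}, this evaluation is weak-$*$ continuous.

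For continuity in the first variable, use the symmetry $\Imunu = I(\nu, \mu)$ which was noted after the definition of $T$, so that for fixed $\nu$, the map $\mu \mapsto \Imunu$ equals $\mu \mapsto \mu(d_\nu)$ with $d_\nu \in C(X)$, and the same argument applies.

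There is no real obstacle here: the result is essentially a restatement of the fact that each $d_\mu$ lies in $C(X)$ together with the definition of the weak-$*$ topology. The only thing to flag is the symmetry identity $I(\mu,\nu) = I(\nu,\mu)$, which follows from Fubini's theorem applied to the symmetric kernel $d(x,y)$ on the product measure $\mu \otimes \nu$; this symmetry has already been observed in the excerpt, so it can simply be invoked.
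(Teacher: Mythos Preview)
Your argument is correct and is exactly the ``straightforward'' proof the paper has in mind; indeed, the paper omits the proof of this theorem entirely, and the representation $I(\mu,\nu) = \nu(d_\mu) = \mu(d_\nu)$ with $d_\mu, d_\nu \in C(X)$ makes separate weak-$*$ continuity immediate from the definition of the weak-$*$ topology.
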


\begin{theorem}
\label{contoncompact}
Let $\Xd$ be a compact metric space and let $\MX$ be given the \wstar{}
topology. Then the functional $I(\cdot)$ is continuous on
any subset of $\MX$ which is $\| \cdot \|_\M$-bounded.
\end{theorem}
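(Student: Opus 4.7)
The plan is to reduce continuity of $I(\cdot)$ on a $\|\cdot\|_\M$-bounded set $B \subseteq \MX$ to the behaviour of the associated potentials $d_\mu = T(\mu)$. Fix $C$ with $\|\nu\|_\M \le C$ for all $\nu \in B$, and consider a net $\mu_\alpha \to \mu$ weak-$*$ in $B$. Using bilinearity, I would write
\[
I(\mu_\alpha) - I(\mu)
 = I(\mu_\alpha, \mu_\alpha - \mu) + I(\mu_\alpha - \mu, \mu),
\]
and then estimate the two terms separately.

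The second term is immediate from the weak-$*$ convergence. Since $d_\mu \in C(X)$ and
\[
I(\mu_\alpha - \mu, \mu)
 = \int_X d_\mu \, d(\mu_\alpha - \mu),
\]
we get $I(\mu_\alpha - \mu, \mu) \to 0$ directly from the definition of the weak-$*$ topology.

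For the first term, the key observation is that
\[
I(\mu_\alpha, \mu_\alpha - \mu)
 = \int_X d_{\mu_\alpha - \mu} \, d\mu_\alpha,
\]
so by the boundedness hypothesis it suffices to show $\|d_{\mu_\alpha - \mu}\|_\infty \to 0$. Pointwise convergence holds trivially: for each fixed $x \in X$ the function $y \mapsto d(x,y)$ lies in $C(X)$, so $d_{\mu_\alpha - \mu}(x) \to 0$ by weak-$*$ convergence. The main point is to upgrade this to uniform convergence, and here I would exploit that the functions $d_{\mu_\alpha - \mu}$ are uniformly Lipschitz: from the triangle inequality,
\[
\bigl| d_{\mu_\alpha - \mu}(x) - d_{\mu_\alpha - \mu}(x') \bigr|
 \leq d(x, x') \cdot \| \mu_\alpha - \mu \|_\M
 \leq 2 C \, d(x, x'),
\]
so the family $\{d_{\mu_\alpha - \mu}\}$ is equicontinuous on the compact space $X$.

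The final step, which is the one bit of work, is to argue that an equicontinuous net in $C(X)$ converging pointwise to $0$ must converge uniformly to $0$. This is the standard Arzelà--Ascoli argument adapted to nets: given $\epsilon > 0$, equicontinuity produces a finite open cover $U_{x_1}, \ldots, U_{x_n}$ of $X$ such that each $|d_{\mu_\alpha - \mu}(y) - d_{\mu_\alpha - \mu}(x_i)| < \epsilon$ for $y \in U_{x_i}$ and all $\alpha$; the pointwise convergence at the finitely many points $x_i$ then forces $\|d_{\mu_\alpha - \mu}\|_\infty < 3\epsilon$ eventually. This is the only delicate step, but it is standard. Combining this with the estimate $|I(\mu_\alpha, \mu_\alpha - \mu)| \le C \|d_{\mu_\alpha - \mu}\|_\infty$ completes the proof.
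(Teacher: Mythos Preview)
Your proof is correct. The paper in fact omits the proof of this theorem entirely, calling it straightforward and noting only that it generalizes parts of Lemma~1 of~\cite{Wol1}; so there is no argument to compare against, and your approach---splitting $I(\mu_\alpha)-I(\mu)$ via bilinearity and then upgrading the pointwise convergence $d_{\mu_\alpha-\mu}\to 0$ to uniform convergence using equicontinuity on the compact space~$X$---is exactly the kind of direct argument one would expect here.
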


\begin{corollary}
\label{cont2}
The functional $I(\cdot)$ is \wstar{} sequentially continuous on $\MX$.
\end{corollary}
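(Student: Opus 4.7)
The plan is to deduce the corollary directly from Theorem~\ref{contoncompact} by reducing weak-$*$ sequential continuity on all of $\M(X)$ to weak-$*$ continuity on a suitable bounded set. Given any sequence $\{\mu_n\}$ in $\M(X)$ converging weak-$*$ to some $\mu \in \M(X)$, I want to exhibit a single $\|\cdot\|_\M$-bounded subset of $\M(X)$ containing the whole sequence together with its limit, and then quote Theorem~\ref{contoncompact} to conclude that $I(\mu_n) \to I(\mu)$.

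The crucial input is that a weak-$*$ convergent sequence in the dual of a Banach space is automatically norm-bounded. Concretely, under the identification $\M(X) \cong C(X)'$, weak-$*$ convergence of $\mu_n$ to $\mu$ means that for every $f \in C(X)$ the scalar sequence $\mu_n(f)$ converges (in particular is bounded). Since $C(X)$ is a Banach space, the Banach--Steinhaus uniform boundedness principle then yields $\sup_n \|\mu_n\|_\M < \infty$. Setting $S := \{\mu_n : n \in \N\} \cup \{\mu\}$ gives a $\|\cdot\|_\M$-bounded subset of $\M(X)$.

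On this bounded set~$S$, Theorem~\ref{contoncompact} asserts that $I(\cdot)$ is continuous in the weak-$*$ topology. Since the restriction of the weak-$*$ topology on $\M(X)$ to~$S$ makes $\mu_n \to \mu$, it follows immediately that $I(\mu_n) \to I(\mu)$, which is precisely the sequential continuity claim.

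There is essentially no obstacle here: the only non-trivial ingredient is the appeal to uniform boundedness, which is a standard consequence of $C(X)$ being a Banach space, and everything else is an application of the previously established Theorem~\ref{contoncompact}. One should note, however, that this argument genuinely uses the sequential nature of the statement: for a general weak-$*$ convergent net, norm-boundedness is not automatic, so the same reasoning would not upgrade to full weak-$*$ continuity of $I(\cdot)$ on $\M(X)$.
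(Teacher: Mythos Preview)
Your proof is correct and follows essentially the same route as the paper: both deduce norm-boundedness of a weak-$*$ convergent sequence from the Banach--Steinhaus theorem and then invoke Theorem~\ref{contoncompact} on the resulting bounded set. Your additional remark that the argument genuinely requires sequences (since nets need not be norm-bounded) is a useful observation and aligns with the paper's subsequent discussion.
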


\begin{proof}
Suppose that $\mu_n \to \mu$ is
a \wstar{} convergent sequence in $\MX$.
Then since $\mu_n(f) \to \mu(f)$ for each $f \in C(X)$,
the set $\bigl\{ | \mu_n(f) | : n \in \N \bigr\}$
is bounded for each $f \in C(X)$,
and it follows from the Banach--Steinhaus (or uniform boundedness)
theorem that the set $\bigl\{ \| \mu_n \|_\M : n \in \N \bigr\}$
is also bounded. It now follows from Theorem~\ref{contoncompact}
that $I(\mu_n) \to I(\mu)$, and so $I(\cdot)$ is sequentially continuous.
\end{proof}

\begin{corollary}
\label{cont1&3&4}
\mbox{}
\begin{enumerate}
\addtolength{\itemsep}{1mm}
\item
The functional $I(\cdot)$ is \wstar{} continuous on~$\MplusX$
\textup{(}and hence in particular on~$\MoneplusX$\textup{)}.
\item
When $X$ is finite, the functional $I(\cdot)$ is \wstar{} continuous on $\MX$.
\end{enumerate}
\end{corollary}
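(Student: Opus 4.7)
The plan is to reduce both parts to Theorem~\ref{contoncompact}, which asserts that $I(\cdot)$ is weak-$*$ continuous on any $\|\cdot\|_\M$-bounded subset of $\MX$. Thus, in each case, it suffices to check that every weak-$*$ convergent net in the ambient space is eventually contained in some $\|\cdot\|_\M$-bounded set.

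For part (1), suppose $\mu_\alpha \to \mu$ is a weak-$*$ convergent net in $\MplusX$. Applying the defining property of weak-$*$ convergence to the constant function $1_X \in C(X)$ gives $\mu_\alpha(X) \to \mu(X)$. Since each $\mu_\alpha$ is positive, $\|\mu_\alpha\|_\M = \mu_\alpha(X)$, so the scalar net $\|\mu_\alpha\|_\M$ converges to the finite number $\|\mu\|_\M$ and is therefore eventually bounded, say by some constant $K$. The tail of the net then lies in the norm-bounded set $\{\nu \in \MX : \|\nu\|_\M \le K\}$, on which $I(\cdot)$ is weak-$*$ continuous by Theorem~\ref{contoncompact}, so $I(\mu_\alpha) \to I(\mu)$. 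Specializing to $\MoneplusX \subseteq \MplusX$ gives the parenthetical statement.

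For part (2), when $X$ is finite, $\MX$ is finite-dimensional (spanned by the point masses $\{\delta_x : x \in X\}$), so all Hausdorff linear topologies on $\MX$ coincide; in particular, the weak-$*$ topology agrees with the measure-norm topology. Any weak-$*$ convergent net in $\MX$ is thus norm-convergent, and hence eventually $\|\cdot\|_\M$-bounded. A second appeal to Theorem~\ref{contoncompact} then yields the desired continuity.

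There is no real obstacle here: the argument is essentially mechanical once Theorem~\ref{contoncompact} is in hand, and the only substantive point is verifying that the relevant nets remain in some bounded set. For positive measures this is extracted by testing against the constant function~$1_X$, while for finite $X$ it follows automatically from finite-dimensionality.
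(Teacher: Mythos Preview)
Your proof is correct, but it takes a different route from the paper's.  The paper derives both parts from Corollary~\ref{cont2} (sequential continuity of~$I(\cdot)$) together with the fact that the relevant spaces are metrizable in the \wstar{} topology: $\MplusX$ is metrizable by~\cite[Theorem~12.10]{Cho1}, and $\MX$ is metrizable when $X$ is finite.  Metrizability then upgrades sequential continuity to continuity.

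By contrast, you bypass Corollary~\ref{cont2} entirely and work directly from Theorem~\ref{contoncompact}, showing in each case that a \wstar{} convergent net is eventually norm-bounded.  For part~(1), your observation that $\|\mu_\alpha\|_\M = \mu_\alpha(X) \to \mu(X)$ via the constant function~$1_X$ is a clean elementary argument that avoids any appeal to metrizability results.  For part~(2), your use of finite-dimensionality to identify the \wstar{} and norm topologies is also straightforward.  Your approach is thus more self-contained, requiring no external references, while the paper's approach is more concise if one is willing to cite the metrizability facts from~\cite{Cho1}.
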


\begin{proof}
Both parts follow from Corollary~\ref{cont2}, using for part~(1)
the fact that the subset~$\MplusX$ of positive measures in~$\MX$
is metrizable (see~\cite[Theorem~12.10]{Cho1})
and for part~(2) the obvious fact that when $X$ is finite
$\MX$ is metrizable (see also \cite[Theorem~16.9]{Cho1}).
\end{proof}

Part~(1) in the case of~$\MoneplusX$ was observed earlier in~\cite{Wol1}.

\begin{remark}
\label{chadremark}
It is useful to note that the identity
\[
\Imunu = \ts\frac{1}{2} \bigl( I(\mu+\nu) - \Imu - \Inu \bigr)
\]
allows information about the continuity of $I(\cdot, \cdot)$
to be deduced from information about the continuity of~$I(\cdot)$
(this was pointed out to the first author by Ben Chad).
Hence Theorem~\ref{contoncompact} and Corollaries \ref{cont2}
and~\ref{cont1&3&4} extend in an obvious way to the functional~$I(\cdot, \cdot)$.
\end{remark}

We now establish a negative result about the continuity
of the functionals~$I$, which shows in particular that
significantly stronger positive results than those above
are impossible.

\begin{theorem}
\label{discontatone}
Let $\Xd$ be an infinite compact metric space.
Then the functionals $I(\cdot)$ and $I(\cdot, \cdot)$
are \wstar{} discontinuous everywhere.
\end{theorem}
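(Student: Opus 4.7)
The plan is to argue by contradiction: first that $I(\cdot)$ is discontinuous at every $\mu_0 \in \MX$, and then to deduce the claim for $I(\cdot, \cdot)$ from this. For the first step, suppose $I(\cdot)$ were weak-$*$ continuous at some $\mu_0$. Then there would exist $f_1, \ldots, f_n \in C(X)$ and $\delta > 0$ with $|I(\mu) - I(\mu_0)| < 1$ on the weak-$*$ neighborhood $U = \{\mu \in \MX : |\int f_i \, d(\mu - \mu_0)| < \delta \text{ for all } i\}$ of $\mu_0$. Set $V_0 = \{\nu \in \MX : \int f_i \, d\nu = 0 \text{ for all } i\}$; for $\nu \in V_0$ and every $t \in \R$, the measure $\mu_0 + t\nu$ lies in $U$, so the quadratic
\[
I(\mu_0 + t\nu) - I(\mu_0) = t^2 I(\nu) + 2t \int d_{\mu_0} \, d\nu
\]
is bounded on all of $\R$, forcing $I(\nu) = 0$ for every $\nu \in V_0$.

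The crucial step is then to polarize: since $V_0$ is linear and $I$ vanishes on it, bilinearity gives $2 I(\nu_1, \nu_2) = I(\nu_1 + \nu_2) - I(\nu_1) - I(\nu_2) = 0$ for all $\nu_1, \nu_2 \in V_0$, i.e.\ $\int T(\nu_1) \, d\nu_2 = 0$. Hence $T(\nu_1)$ annihilates $V_0$ whenever $\nu_1 \in V_0$. But $V_0$ is by construction the annihilator in $\MX = C(X)'$ of the norm-closed finite-dimensional subspace $[f_1, \ldots, f_n] \subseteq C(X)$, so by the standard biannihilator identity its preannihilator in $C(X)$ is precisely $[f_1, \ldots, f_n]$. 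Thus $T(V_0) \subseteq [f_1, \ldots, f_n]$. Since $V_0$ has codimension at most $n$ in $\MX$, choosing a complement $W$ of dimension at most $n$ yields $\im T = T(V_0) + T(W)$ of dimension at most $2n$, contradicting Theorem~\ref{newtheorem}, since $X$ is infinite.

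For $I(\cdot, \cdot)$, suppose weak-$*$ continuity holds at some $(\mu_0, \nu_0) \in \MX \times \MX$. The weak-$*$ continuous affine map $\phi \colon \MX \to \MX \times \MX$ given by $\phi(\mu) = (\mu, \mu + \nu_0 - \mu_0)$ sends $\mu_0$ to $(\mu_0, \nu_0)$, and bilinearity yields $I(\phi(\mu)) = I(\mu) + \int d_{\nu_0 - \mu_0} \, d\mu$. The last summand is weak-$*$ continuous in $\mu$ (its integrand lies in $C(X)$), so $I(\cdot)$ would be continuous at $\mu_0$, contradicting what was just established. The main obstacle is the polarization-and-duality step identifying $T(V_0) \subseteq [f_1, \ldots, f_n]$: this is the bridge through which the infinite-dimensionality of $\im T$ afforded by Theorem~\ref{newtheorem} forces the contradiction, while the remaining steps are essentially mechanical manipulations of the definitions.
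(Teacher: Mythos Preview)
Your argument is correct, and it takes a genuinely different route from the paper's. Both proofs hinge on Theorem~\ref{newtheorem} (infinite $X$ forces $\dim(\im T) = \infty$), but they use it in opposite directions and in the opposite order. The paper first treats $I(\cdot,\cdot)$: it builds an explicit net, indexed by finite subsets $\alpha \subseteq C(X)$, choosing for each $\alpha$ a unit-norm $\mu_\alpha$ with $d_{\mu_\alpha} \notin [\alpha]$ and then (via Hahn--Banach) a $\nu_\alpha$ annihilating $\alpha$ with $I(\mu_\alpha, \nu_\alpha) = |\alpha|$; Banach--Alaoglu yields a convergent subnet of the $\mu_\alpha$, while $\nu_\alpha \to 0$, and the resulting divergence of $I(\mu_{\alpha(\beta)}, \nu_{\alpha(\beta)})$ gives discontinuity at one point, which is then propagated everywhere and pushed to $I(\cdot)$ via the polarization identity. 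You instead start with $I(\cdot)$ and argue by contradiction: continuity at a single point forces $I$ to vanish on an entire finite-codimension subspace $V_0$ of $\MX$, and polarization plus the biannihilator identity then traps $T(V_0)$ inside a finite-dimensional subspace of $C(X)$, contradicting Theorem~\ref{newtheorem}; discontinuity of $I(\cdot,\cdot)$ follows by your affine composition trick. Your approach is more conceptual and avoids the compactness/subnet machinery entirely, giving a clean equivalence between weak-$*$ continuity of $I$ at a point and finite-dimensionality of $\im T$; the paper's approach, on the other hand, produces an explicit witnessing net, which can be useful if one later wants quantitative information.
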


\begin{proof}
We use here some ideas from exercise~2, \S4, Chap.~3 of~\cite{Bou}.
We define a net of pairs of measures in $\MX \times \MX$.
For index set, we take the set~$A$ of all finite subsets of $C(X)$,
directed by set inclusion. Consider a fixed collection
$\{f_1, \ldots, f_n\} \in A$, where $f_1, \ldots, f_n$ are distinct.
Then by Theorem~\ref{newtheorem}, there exists $\mu \in \MX$ such that
$d_\mu$ is not in the linear span of $\{f_1, \ldots, f_n\}$.
We may clearly assume that $\| \mu \|_\M = 1$.
By the Hahn--Banach theorem, there exists $\nu \in \MX$
such that $\nu(f_i) = 0$ for $i = 1, \ldots, n$
but $\nu(d_\mu) \neq 0$; that is, in our usual notation,
$\Imunu \neq 0$. We may clearly rescale~$\nu$ so that $\Imunu$
has any desired non-zero value, and it is convenient here
to assume that $\Imunu = n$.
Writing $\alpha = \{f_1, \ldots, f_n\}$, let us
denote the measures $\mu$ and~$\nu$ just found by
$\mu_\alpha$ and~$\nu_\alpha$, respectively.

We claim that the net $\{ \nu_\alpha \}$ converges \wstar{} to~$0$ in $\MX$.
Indeed, given $f \in C(X)$, we have $\{ f \} \in A$, and our choice
of $\nu_{\{f\}}$ means that $\nu_{\{f\}}(f) = 0$. Also, if $\alpha \in A$
is such that $\{ f \} \subseteq \alpha$, then $\nu_\alpha(f) = 0$,
so $\nu_\alpha(f) \to 0$ in~$\R$, as required for \wstar{} convergence.

We chose the measures $\{ \mu_\alpha \}$ so that $\| \mu_\alpha \|_\M = 1$
for all $\alpha$, so the~$\mu_\alpha$ all lie in the unit ball
of $\MX$, which by the Banach--Alaoglu theorem
(see also Corollary~12.7 of~\cite{Cho1}) is \wstar{} compact.
Hence there exists a \wstar{} convergent subnet,
say $\mu_{\alpha(\beta)} \to \mu$, of the $\mu_\alpha$.
Thus we have $(\mu_{\alpha(\beta)}, \nu_{\alpha(\beta)}) \to (\mu, 0)$,
where the convergence is \wstar{} in each coordinate.
But $\mu_\alpha$ and~$\nu_\alpha$ were chosen in such a way
that $I(\mu_\alpha, \nu_\alpha) = |\alpha|$ (the cardinality of~$\alpha$),
so it follows that the net $I(\mu_{\alpha(\beta)}, \nu_{\alpha(\beta)})$
diverges in~$\R$. That is, the functional $I(\cdot, \cdot)$
is discontinuous at $(\mu, 0) \in \MX \times \MX$.

A straightforward argument now shows that $I(\cdot, \cdot)$ is discontinuous
at all points in $\MX \times \MX$, and the observation in Remark~\ref{chadremark} then implies that $I(\cdot)$ is
discontinuous everywhere.
\end{proof}

We note the following result for later application.

\begin{corollary}
\label{discont1}
Let $\Xd$ be an infinite compact metric space.
Then the functional $I(\cdot)$, when restricted to the domain~$\Mzero(X)$,
is \wstar{} discontinuous at all points.
\end{corollary}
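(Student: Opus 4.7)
The plan is to mimic the proof of Theorem~\ref{discontatone}, taking care that all measures in the construction lie in $\MzeroX$, and then to propagate the resulting single-point discontinuity to every point by a brief identity argument.

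I would first verify that $\dim(\im \Tzero) = \infty$. Fix any $x_0 \in X$. For every $\mu \in \MX$ one has $\mu = (\mu - \mu(X)\delta_{x_0}) + \mu(X)\delta_{x_0}$ with $\mu - \mu(X)\delta_{x_0} \in \MzeroX$, so $\im T \subseteq \im \Tzero + \R \cdot d_{\delta_{x_0}}$. Since $X$ is infinite, Theorem~\ref{newtheorem} forces $\im T$ to be infinite-dimensional, hence so is $\im \Tzero$.

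Next I would adapt the net construction of Theorem~\ref{discontatone}. Let $A$ be the set of finite subsets of $C(X)$, directed by inclusion. For $\alpha = \{f_1, \ldots, f_n\} \in A$, the linear hull $V_\alpha := [\, 1, f_1, \ldots, f_n \,]$ is finite-dimensional, so by the previous paragraph I may pick $\mu_\alpha \in \MzeroX$ with $\|\mu_\alpha\|_\M = 1$ and $d_{\mu_\alpha} \notin V_\alpha$. Hahn--Banach then produces $\nu_\alpha \in \MX$ vanishing on $V_\alpha$---in particular $\nu_\alpha(1) = 0$, placing $\nu_\alpha$ in $\MzeroX$---and scaled so that $\nu_\alpha(d_{\mu_\alpha}) = |\alpha|$, i.e., $I(\mu_\alpha, \nu_\alpha) = |\alpha|$. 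Exactly as in Theorem~\ref{discontatone}, the net $\nu_\alpha \to 0$ \wstar{}, and Banach--Alaoglu furnishes a subnet $\mu_{\alpha(\beta)} \to \mu$ \wstar{}; the limit $\mu$ lies in $\MzeroX$ since this subspace is the \wstar{}-closed kernel of $\mu \mapsto \mu(1)$. Setting $\rho_\beta := \mu_{\alpha(\beta)} + \nu_{\alpha(\beta)}$ and $\sigma_\beta := \mu_{\alpha(\beta)} - \nu_{\alpha(\beta)}$, both lie in $\MzeroX$ and converge \wstar{} to $\mu$, yet the polarization identity gives
\[
I(\rho_\beta) - I(\sigma_\beta) = 4 I(\mu_{\alpha(\beta)}, \nu_{\alpha(\beta)}) = 4|\alpha(\beta)|,
\]
which diverges. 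Hence $I(\rho_\beta)$ and $I(\sigma_\beta)$ cannot both converge to $I(\mu)$, so $I|_{\MzeroX}$ is \wstar{}-discontinuous at the point $\mu$.

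Finally I would propagate. For any $\sigma_0 \in \MzeroX$, the identity $I(\sigma_0 + \eta) - I(\sigma_0) = I(\eta) + 2\eta(d_{\sigma_0})$ combined with the \wstar{}-continuity of $\eta \mapsto \eta(d_{\sigma_0})$ (since $d_{\sigma_0} \in C(X)$) shows that continuity of $I|_{\MzeroX}$ at $\sigma_0$ is equivalent to the $\sigma_0$-independent condition that $I(\eta_\gamma) \to 0$ whenever $\eta_\gamma \to 0$ \wstar{} in $\MzeroX$. The net $\eta_\beta := \rho_\beta - \mu$ (or $\sigma_\beta - \mu$, whichever violates $I(\cdot) \to I(\mu)$) already falsifies this condition, so $I|_{\MzeroX}$ is discontinuous at every $\sigma_0 \in \MzeroX$. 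The main obstacle in this plan is the need to keep the auxiliary measure $\nu_\alpha$ in $\MzeroX$, and this is dealt with neatly by including the constant function~$1$ in the subspace that $\nu_\alpha$ annihilates.
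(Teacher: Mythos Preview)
Your proof is correct, but it takes a different route from the paper's. The paper argues by reduction: it first observes (as you do) that continuity of $I|_{\MzeroX}$ at any one point is equivalent to continuity at~$0$, and then shows that continuity at $0 \in \MzeroX$ would force continuity at $0 \in \MX$, contradicting Theorem~\ref{discontatone}. The reduction is a simple projection: given a net $\mu_\alpha \to 0$ in $\MX$, set $\nu_\alpha = \mu_\alpha - \mu_\alpha(X)\delta_x \in \MzeroX$; then $\nu_\alpha \to 0$ \wstar{}, and expanding $I(\nu_\alpha)$ using $I(\delta_x) = 0$ and separate continuity recovers $I(\mu_\alpha) \to 0$.

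By contrast, you rerun the entire construction of Theorem~\ref{discontatone} inside $\MzeroX$, the key device being the inclusion of the constant~$1$ in the annihilated subspace so that the Hahn--Banach functional~$\nu_\alpha$ automatically has mass zero. This is perfectly sound and self-contained, but it duplicates work already packaged in Theorem~\ref{discontatone}; the paper's projection trick is shorter because it uses that theorem as a black box. Your approach does have the minor advantage of making explicit a \wstar{}-null net in $\MzeroX$ along which $I$ fails to converge, whereas the paper's contrapositive argument does not exhibit one directly.
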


\begin{proof}
It is easy to show that $I(\cdot)$ is continuous at all points
of~$\MzeroX$ if and only if it is continuous at one,
so it suffices to show that $I(\cdot)$ is discontinuous at $0 \in \MzeroX$.

Assume that $I(\cdot)$ is continuous at $0 \in \MzeroX$,
and suppose that $\mu_\alpha \to 0$ for some net $\{ \mu_\alpha \}$
in $\MX$. Let $\delta$ be any fixed atomic probability measure,
and let $m_\alpha = \mu_\alpha(X)$.
Then $\mu_\alpha(X) = \int_X d\mu_\alpha \to 0$, so $m_\alpha \to 0$.
Put $\nu_\alpha = \mu_\alpha - m_\alpha \delta$,
so that $\nu_\alpha \in \MzeroX$.
Now $\nu_\alpha \to 0$ weak-$*$, since for any $f \in C(X)$
we have
$\int_X f \, d\nu_\alpha
 = \int_X f \, d\mu_\alpha - m_\alpha \int_X f \, d\delta
 \to 0$.
Hence, by assumption, $I(\nu_\alpha) \to 0$.
But
$I(\nu_\alpha)
 = I(\mu_\alpha - m_\alpha \delta)
 = I(\mu_\alpha) - 2 m_\alpha I(\mu_\alpha, \delta)$,
and $I(\mu_\alpha, \delta) \to 0$ by separate continuity
of $I(\cdot, \cdot)$, so
$\bigl| m_\alpha I(\mu_\alpha, \delta) \bigr|
 = |m_\alpha| \cdot |I(\mu_\alpha, \delta)|
 \to 0$,
and hence $I(\mu_\alpha) \to 0$.
Therefore, $I(\cdot)$ is continuous at $0 \in \MX$,
contradicting Theorem~\ref{discontatone}, and this completes the proof.
\end{proof}

\section{The Quasihypermetric Property}
\label{sect:qhm}

The quasihypermetric property is the most important metric property
considered in this paper and in \cite{NW2} and~\cite{NW3}.
In view of the fact that
our ultimate interest is the study of the geometric constant $\mbar$,
the following simple result explains why our focus is almost
exclusively on these spaces.

\begin{theorem}
\label{2.6}
If $\Xd$ is a compact non-quasihypermetric space,
then $\mbar(X) = \infty$.
\end{theorem}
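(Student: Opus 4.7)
The plan is to exploit the definition of non-quasihypermetricity directly to produce a one-parameter family in $\Mone(X)$ along which $I$ diverges.

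First, by assumption there exist $n \in \N$, reals $\alpha_1, \ldots, \alpha_n$ with $\sum_{i=1}^n \alpha_i = 0$, and points $x_1, \ldots, x_n \in X$ such that $\sum_{i,j=1}^n \alpha_i \alpha_j d(x_i, x_j) > 0$. I would package this data as the finitely supported signed measure $\sigma = \sum_{i=1}^n \alpha_i \delta_{x_i}$. By construction $\sigma(X) = 0$, so $\sigma \in \MzeroX$, and $I(\sigma) = \sum_{i,j=1}^n \alpha_i \alpha_j d(x_i, x_j) > 0$.

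Next, fix any probability measure $\mu_0 \in \MoneplusX$ (for example $\mu_0 = \delta_{x_1}$) and, for each $t \geq 0$, set $\mu_t = \mu_0 + t\sigma$. Since $\mu_0(X) = 1$ and $\sigma(X) = 0$, we have $\mu_t \in \Mone(X)$ for every $t$. Using the bilinearity identity already recorded in the paper,
\[
I(\mu_t) = I(\mu_0) + 2t\, I(\mu_0, \sigma) + t^2 I(\sigma).
\]
This is a quadratic polynomial in $t$ with positive leading coefficient $I(\sigma) > 0$, hence $I(\mu_t) \to \infty$ as $t \to \infty$. Since all the $\mu_t$ lie in $\Mone(X)$, this forces $\mbar(X) = \sup\{ I(\mu) : \mu \in \Mone(X) \} = \infty$.

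There is essentially no obstacle here: the only thing to verify is that the perturbation preserves total mass $1$, which is immediate from $\sigma \in \MzeroX$. The argument relies only on the definitions and the bilinear expansion of $I(\mu_0 + t\sigma)$, without needing any continuity results or the semi-inner product machinery developed later.
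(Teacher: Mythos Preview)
Your proof is correct and is essentially identical to the paper's own argument: both construct $\sigma \in \MzeroX$ with $I(\sigma) > 0$ from the failure of the quasihypermetric inequality, then push along the affine line $\mu_0 + t\sigma$ in $\Mone(X)$ (the paper takes $\mu_0 = \delta_x$ and integer $t = n$) and observe that the resulting quadratic in $t$ has positive leading coefficient. The only cosmetic difference is that you allow a general $\mu_0 \in \MoneplusX$ and a continuous parameter, while the paper fixes a point mass and a discrete parameter.
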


\begin{proof}
If $X$ is non-quasihypermetric, then there exist $n \in \N$,
$\alpha_1, \ldots, \alpha_n \in \R$ with
$\sum_{i=1}^n \alpha_i = 0$ and $x_1, \ldots, x_n \in X$
such that $\sum_{i,j=1}^n \alpha_i \alpha_j d(x_i, x_j) > 0$.
Writing $\mu = \sum_{i=1}^n \alpha_i \delta_{x_i}$,
we therefore have $\mu \in \Mzero(X)$ and $\Imu > 0$
(see also condition~(3) in Theorem~\ref{qhmconds} below).
Now choose any $x \in X$, and define $\mu_n \in \Mone(X)$ by setting
$\mu_n = n \mu + \delta_x$ for each $n \in \N$.
Then $I (\mu_n) = n^2 \Imu + 2n d_\mu (x) \to \infty$ as $n \to \infty$,
giving the result.
\end{proof}

We record a list of conditions which are
equivalent to the quasihypermetric condition.

\begin{theorem}
\label{qhmconds}
Let $\Xd$ be a compact metric space. Then the following conditions
are equivalent.
\begin{enumerate}
\addtolength{\itemsep}{1mm}
\item
$\Xd$ is quasihypermetric.
\item
$\sum_{i,j=1}^n d(x_i, x_j) + \sum_{i,j=1}^n d(y_i, y_j)
 \leq
2 \sum_{i,j=1}^n d(x_i, y_j)$
for all $n \in \N$ and for all $x_1, \ldots, x_n, y_1, \ldots, y_n \in X$.
\item
$\Imu \leq 0$
for all $\mu \in \MzeroX$.
\item
$\Imunu^2 \leq \Imu \Inu$
for all $\mu, \nu \in \MzeroX$.
\item
$\Imu + \Inu \leq 2 \Imunu$
for all $\mu, \nu \in \MoneX$.
\item
$\Imu + \Inu \leq 2 \Imunu$
for all $\mu, \nu \in \MoneplusX$.
\item
$\frac{1}{2} \bigl(\Imu + \Inu\bigr)
\leq
I\bigl(\frac{1}{2}(\mu + \nu)\bigr)$
for all $\mu, \nu \in \MoneX$.
\item
$\frac{1}{2} \bigl(\Imu + \Inu\bigr)
\leq
I\bigl(\frac{1}{2}(\mu + \nu)\bigr)$
for all $\mu, \nu \in \MoneplusX$.
\end{enumerate}
\end{theorem}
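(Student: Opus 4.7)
The plan is to organise the eight conditions into two elementary algebraic clusters joined by a single analytic bridge. Conditions (3), (5), (6), (7), (8) are all essentially the same statement written at different levels of generality, and are interlinked by the bilinearity identity $I(\mu \pm \nu) = I(\mu) + I(\nu) \pm 2I(\mu,\nu)$ recorded in Section~\ref{chapter1}. Explicitly, for $\mu,\nu \in \MoneX$ the measure $\mu-\nu$ lies in $\MzeroX$ with $I(\mu-\nu) = I(\mu)+I(\nu)-2I(\mu,\nu)$, giving (3)$\Rightarrow$(5)$\Rightarrow$(6); conversely, any nonzero $\sigma \in \MzeroX$ is, via Hahn--Jordan, of the form $\sigma = c(\mu-\nu)$ with $c = \sigma^+(X) = \sigma^-(X) > 0$ and $\mu,\nu \in \MoneplusX$, so $I(\sigma) = c^2(I(\mu)+I(\nu)-2I(\mu,\nu))$ and (6)$\Rightarrow$(3). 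Hence (3)$\Leftrightarrow$(5)$\Leftrightarrow$(6); finally, (5)$\Leftrightarrow$(7) and (6)$\Leftrightarrow$(8) follow from $I\bigl(\tfrac12(\mu+\nu)\bigr) = \tfrac14(I(\mu)+I(\nu)+2I(\mu,\nu))$.

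Next I would handle the finite/atomic cluster. Condition (2) is (1) applied to $\mu = \sum_i(\delta_{x_i} - \delta_{y_i})$, so (1)$\Rightarrow$(2); for (2)$\Rightarrow$(1), integer coefficients reduce to (2) by duplicating points with multiplicity and separating positive and negative indices, rational coefficients follow by clearing denominators, and arbitrary real coefficients follow by continuity of the finite quadratic form $\sum_{i,j}\alpha_i\alpha_j d(x_i,x_j)$ in $(\alpha_1,\ldots,\alpha_n)$. Thus (1)$\Leftrightarrow$(2). The trivial direction (3)$\Rightarrow$(1) is just restriction to atomic measures. For the bridge (1)$\Rightarrow$(3), fix $\sigma \in \MzeroX$; if $\sigma=0$ there is nothing to prove, and otherwise write $\sigma = c(\mu-\nu)$ with $\mu,\nu \in \MoneplusX$ as above. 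Using the standard fact that the atomic probability measures are \wstar{} dense in $\MoneplusX$ for compact metric $X$, pick sequences $\mu_n,\nu_n$ of atomic probability measures with $\mu_n \to \mu$ and $\nu_n \to \nu$ \wstar, and set $\sigma_n = c(\mu_n-\nu_n) \in \MzeroX$. Each $\sigma_n$ is atomic of total mass zero, so (1) gives $I(\sigma_n) \leq 0$. Since $\sigma_n \to \sigma$ \wstar{} and $\|\sigma_n\|_\M \leq 2c$, Theorem~\ref{contoncompact} applies to the norm-bounded set $\{\sigma_n\} \cup \{\sigma\}$ and yields $I(\sigma_n) \to I(\sigma)$, so $I(\sigma) \leq 0$.

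It remains to handle (4). For (3)$\Rightarrow$(4) I would run the usual Cauchy--Schwarz argument for a negative semi-definite bilinear form: $\mu + t\nu \in \MzeroX$ for every $t \in \R$ and every $\mu,\nu \in \MzeroX$, so $0 \geq I(\mu+t\nu) = I(\mu) + 2tI(\mu,\nu) + t^2I(\nu)$, and optimising in $t$ (treating separately the case $I(\nu)=0$, where forcing this quadratic to be nonpositive for all $t$ gives $I(\mu,\nu)=0$) produces $I(\mu,\nu)^2 \leq I(\mu)I(\nu)$. For (4)$\Rightarrow$(3), note that if $|X|=1$ then $\MzeroX = \{0\}$ and (3) is vacuous; otherwise pick distinct $x,y \in X$ and set $\nu_0 = \delta_x - \delta_y \in \MzeroX$, for which $I(\nu_0) = -2d(x,y) < 0$. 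If some $\mu \in \MzeroX$ had $I(\mu) > 0$, then $I(\mu,\nu_0)^2 \geq 0 > I(\mu)I(\nu_0)$, contradicting (4).

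The main obstacle is the bridge (1)$\Rightarrow$(3) in Step~three: one must pass from a combinatorial inequality on finite atomic configurations to an inequality on arbitrary signed Borel measures of mass zero. This is precisely where the work of Section~\ref{sect:propsofTandI} pays off, since the natural approximation lives in the \wstar{} topology but the functional~$I(\cdot)$ is only continuous there on norm-bounded sets; the Hahn--Jordan decomposition of $\sigma$ into two scaled probability measures is what keeps the approximating sequence norm-bounded, letting Theorem~\ref{contoncompact} close the argument.
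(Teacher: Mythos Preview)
Your proposal is correct and aligns with the paper's approach: the paper only spells out (3)\,$\Leftrightarrow$\,(4), using exactly your Cauchy--Schwarz argument for the semi-inner product $(\mu\mid\nu)=-I(\mu,\nu)$ in one direction and the test measure $\nu=\delta_x-\delta_y$ in the other, while declaring the remaining equivalences ``for the most part straightforward'' and deferring (1)\,$\Leftrightarrow$\,(2) to~\cite{LPWZ}. Your write-up simply supplies those omitted details, and your handling of the one genuinely analytic step (1)\,$\Rightarrow$\,(3) via \wstar{} approximation on a norm-bounded set and Theorem~\ref{contoncompact} is sound and is the natural use of the machinery set up in Section~\ref{sect:propsofTandI}.
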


To these, for completeness, we add the following variants
of the last two conditions.

\begin{itemize}
\addtolength{\itemsep}{1mm}
\item[(7$'$)]
\begin{it}
$\alpha\Imu + \beta\Inu
\leq
I(\alpha\mu + \beta\nu)$
for all $\mu, \nu \in \MoneX$ and all $\alpha, \beta \in \R$
such that $\alpha, \beta \geq 0$ and $ \alpha + \beta = 1$.
\end{it}
\item[(8$'$)]
\begin{it}
$\alpha\Imu + \beta\Inu
\leq
I(\alpha\mu + \beta\nu)$
for all $\mu, \nu \in \MoneplusX$ and all $\alpha, \beta \in \R$
such that $\alpha, \beta \geq 0$ and $ \alpha + \beta = 1$.
\end{it}
\end{itemize}

\begin{proof}
The proofs are for the most part straightforward,
and we show only the equivalence of (3) and~(4)
(and note also that the equivalence of (1) and~(2) is outlined
on page~2049 of~\cite{LPWZ}).

Assuming~(3), we define a semi-inner product $(\cdot \mid \cdot)$
on the space $\MzeroX$ by the formula
$
(\mu \mid \nu) = -\Imunu
$
for $\mu, \nu \in \MzeroX$;
that the semi-inner product axioms are satisfied is clear
(we will study and use this semi-inner product extensively below).
It is clear that the Cauchy--Schwarz inequality
for the semi-inner product gives~(4).
Conversely, assume~(4).
If $X$ is singleton, then (3) is immediate.
Otherwise, let $\nu$ be any element of $\MzeroX$
such that $\Inu < 0$; we may take
$\nu = \delta_x - \delta_y$
for any pair of distinct elements $x, y \in X$, for example.
Then (4) implies that $\Imu \leq 0$
for all $\mu \in \MzeroX$, giving~(3).
\end{proof}

An important and much less elementary equivalence is given
by Schoenberg~\cite{Sch3}: a separable metric space $\Xd$
is quasihypermetric if and only if the metric space $(X, d^\frac{1}{2})$
is isometrically embeddable in the Hilbert space~$\ell^2$.

The quasihypermetric property has been discovered several times;
it appears independently, for example,
in L\'evy~\cite{Lev}, Schoenberg~\cite{Sch3},
Bj\"orck~\cite{Bjo} and Kelly~\cite{Kel3},
in each case as part of a study
involving more general geometric inequalities.
The term `quasihypermetric' was introduced by Kelly~\cite{Kel3};
elsewhere, quasihypermetric spaces, or their metrics,
have been referred to as \textit{of negative type}
(see~\cite{Bret}, for example).

There are several important classes of quasihypermetric spaces.
\begin{enumerate}
\item
The euclidean spaces $\R^n$ for all $n \geq 1$.
\item
More generally, the space $\R^n$ for all~$n$, equipped with
the usual $p$-norm for $1 \leq p \leq 2$.
\item
All two-dimensional real normed spaces.
\item
All metric spaces with four or fewer points.
\item
The $n$-dimensional sphere $S^n$ in $\R^{n+1}$ for $n \geq 1$,
equipped with the great-circle metric.
\end{enumerate}

When $n \geq 3$, the space $\R^n$ equipped with
the $p$-norm for $2 < p \leq \infty$ is not quasihypermetric.

The first three classes of examples above are essentially given by
classical results from the theory of $L^1$-em\-bed\-dabil\-ity
(see~\cite{Lev, Dor, Herz}), as is the negative statement;
the case of a metric space with four points is part of
Blumenthal's `four-point theorem' (see \cite[Theorem~52.1]{Blu1});
and the case of the sphere with the great-circle metric
is given in~\cite{Kel2}. The cases of $\R^n$ with the
$p$-norm for $1 \leq p \leq 2$ and of $S^n$ with the great-circle
metric are also given by a general construction of Alexander~\cite{Alex5}
using the methods of integral geometry.

\begin{definition}
\label{sqhmdefn}
A compact quasihypermetric space $\Xd$ is said to be \textit{strictly quasihypermetric}
if $\Imu = 0$ only when $\mu = 0$, for $\mu \in \MzeroX$.
\end{definition}

Lemma~1 of~\cite{Bjo}, in this terminology, yields the following statement.

\begin{theorem}
\label{Rnstrict}
Every compact subset of\/~$\R^n$ is strictly quasihypermetric.
\end{theorem}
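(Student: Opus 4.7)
The strategy is to reduce to one dimension via the classical spherical integral representation of the Euclidean norm, settle the result there by an explicit formula, and then lift back using injectivity of one-dimensional projections.

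The reduction exploits the identity
\[
\|v\| = c_n \int_{S^{n-1}} |\langle v, u\rangle| \, d\sigma(u), \quad v \in \R^n,
\]
where $\sigma$ is the uniform probability measure on $S^{n-1}$ and $c_n > 0$; this follows immediately from rotational invariance of $\sigma$ combined with positive homogeneity of the left-hand side. Substituting $v = x - y$ and applying Fubini to the resulting triple integral yields, for any $\mu \in \Mzero(X)$,
\[
\Imu = c_n \int_{S^{n-1}} I_1(\mu_u) \, d\sigma(u),
\]
where $\mu_u$ is the pushforward of $\mu$ under the linear functional $x \mapsto \langle x, u\rangle$ and $I_1$ denotes the analogous energy integral on $\R$ with kernel $|s-t|$. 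Each $\mu_u$ is a compactly supported element of $\Mzero(\R)$.

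Next I would establish the one-dimensional identity
\[
I_1(\nu) = -2 \int_\R F(\tau)^2 \, d\tau
\]
for compactly supported $\nu \in \Mzero(\R)$, where $F(\tau) = \nu((-\infty, \tau])$. The quick route is to use
\[
|s - t| = \int_\R \bigl| \mathbf{1}_{(\tau, \infty)}(s) - \mathbf{1}_{(\tau, \infty)}(t) \bigr| \, d\tau
\]
together with the observation that on $\{0,1\}$-valued quantities $|a - b| = (a - b)^2 = a + b - 2ab$. After an application of Fubini (justified by the compact support of $\nu$ and the bounded indicator integrand), the inner double integral against $\nu \otimes \nu$ reduces to $2 \nu(\R) \nu((\tau, \infty)) - 2 \nu((\tau, \infty))^2 = -2 F(\tau)^2$, where the hypothesis $\nu(\R) = 0$ kills the first term and identifies $\nu((\tau, \infty))^2$ with $F(\tau)^2$. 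Consequently $I_1(\nu) \leq 0$, with equality if and only if $F \equiv 0$, i.e.\ $\nu = 0$.

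Combining the two steps, if $\Imu = 0$ then the nonpositive integrand $I_1(\mu_u)$ must vanish for $\sigma$-almost every $u \in S^{n-1}$, so $\mu_u = 0$ for $\sigma$-a.e.\ $u$. The assignment $u \mapsto \mu_u$ is \wstar{} continuous on $S^{n-1}$ (for each $f \in C_0(\R)$, the map $u \mapsto \int f(\langle x, u\rangle) \, d\mu(x)$ is continuous by dominated convergence), so the vanishing set is closed; having full $\sigma$-measure in a space where $\sigma$ has full support, it must equal all of $S^{n-1}$. The vanishing of every one-dimensional projection of the compactly supported finite signed measure $\mu$ then forces $\hat\mu \equiv 0$ by the Cram\'er--Wold device (equivalently, injectivity of the Radon transform on compactly supported measures), whence $\mu = 0$. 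The single point demanding genuine care is the one-dimensional identity and the Fubini application that produces it; the remaining ingredients are essentially bookkeeping.
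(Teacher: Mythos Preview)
Your argument is correct. Note, however, that the paper does not supply its own proof of this theorem: it simply attributes the result to Lemma~1 of Bj\"orck~\cite{Bjo} and remarks afterward that the statement is equivalent to the uniqueness theorem for the Radon transform. Your route---the spherical integral representation of $\|\cdot\|$, reduction to the one-dimensional kernel via pushforwards, the explicit formula $I_1(\nu) = -2\int F^2$, and the Cram\'er--Wold step to recover $\mu$ from its linear projections---is a complete self-contained proof along the integral-geometric lines the paper alludes to in connection with Alexander~\cite{Alex5}, and your final step is precisely the Radon-transform uniqueness the paper mentions.

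One small point you elided: from $\int_\R F(\tau)^2\,d\tau = 0$ you obtain only $F = 0$ Lebesgue-a.e.; to conclude $F \equiv 0$ (and hence $\nu = 0$) you need that $F$ is right-continuous, which is standard for the distribution function of a finite signed measure but worth a word. Everything else is in order, including the closure argument passing from $\mu_u = 0$ for $\sigma$-a.e.\ $u$ to all $u$.
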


This fact has also been discovered independently more than once;
it is noted in~\cite{Alex3} that it is equivalent to the uniqueness
theorem for the Radon transform.
A theorem implying the weaker statement
that finite subsets of~$\R^n$ are strictly quasihypermetric was proved in~\cite{Sch2}. 

\begin{example}
\label{nonstrict}
Let $X$ be the circle $S^1$ of radius~$1$, given the arc-length
metric~$d$. Since $d$ is the one-dimensional form of the
great-circle metric, $X$ is quasihypermetric, as noted above.
We claim that $X$ is not strictly quasihypermetric. Indeed,
let $x_1$ and $y_1$ be diametrically opposite points
in~$X$. Then, if we set $\mu_1 = \delta_{x_1} + \delta_{y_1}$,
it is easy to see that the integral
$\int_X d(x, y) \, d\mu_1(x)$ has the constant value $\pi = D(X)$
for all~$y$. Hence, if a second measure $\mu_2$ is similarly
defined for a different pair of points $x_2, y_2$, and if we write
$\mu = \mu_1 - \mu_2$, then we have $0 \neq \mu \in \MzeroX$, while
$\Imu = I(\mu_1) + I(\mu_2) - 2I(\mu_1, \mu_2) = \pi +\pi - 2\pi = 0$.

The same argument shows that the sphere $S^{n-1}$
with the great-circle metric fails to be strictly quasihypermetric
for all $n > 1$.
The argument shows moreover that the subspace
$\{ x_1, y_1, x_2, y_2 \}$ of $S^1$ is a $4$-element metric space
which is quasihypermetric but not strictly quasihypermetric.
\end{example}

\begin{theorem}
\label{2.19}
Let $\Xd$ be a non-trivial compact strictly quasihypermetric space. Then
\begin{enumerate}
\item[(1)]
$T$ is injective, and
\item[(2)]
$\im T$ is dense in $C(X)$.
\end{enumerate}
\end{theorem}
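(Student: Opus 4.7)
The plan is to exploit Theorem~\ref{2.18}, which already tells us that (1) and~(2) are equivalent: $T$ is injective if and only if $\im T$ is dense in $C(X)$. So I only have to prove one of them, and injectivity looks like the natural target because it interacts directly with the hypothesis $\Imu = 0 \Rightarrow \mu = 0$ on $\MzeroX$.

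So, suppose $\mu \in \MX$ satisfies $T(\mu) = d_\mu = 0$; I want to conclude $\mu = 0$. The obstacle is that strict quasihypermetricity, as defined, controls only mass-zero measures, whereas $\mu$ might have $\mu(X) \neq 0$. The natural workaround is to subtract off a point mass: set $m = \mu(X)$, pick any $x_0 \in X$, and put $\mu_0 = \mu - m \delta_{x_0}$, so that $\mu_0 \in \MzeroX$.

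Now I would compute $d_{\mu_0}$ and $I(\mu_0)$ directly. By linearity of $T$ and the hypothesis $d_\mu = 0$, we get
\[
d_{\mu_0}(x) = d_\mu(x) - m \, d(x, x_0) = -m \, d(x, x_0)
\]
for all $x \in X$. In particular $d_{\mu_0}(x_0) = 0$. Using symmetry of~$d$ and Fubini (or equivalently the identity $I(\alpha, \beta) = \int d_\alpha \, d\beta$), this gives
\[
I(\mu_0) = \int_X d_{\mu_0} \, d\mu_0 = -m \int_X d(x, x_0) \, d\mu_0(x) = -m \, d_{\mu_0}(x_0) = 0.
\]
Since $\mu_0 \in \MzeroX$ and $X$ is strictly quasihypermetric, this forces $\mu_0 = 0$, i.e.\ $\mu = m\delta_{x_0}$.

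To finish, I feed this back into the assumption $d_\mu = 0$: since $d_\mu(x) = m\,d(x, x_0)$, vanishing identically plus the non-triviality of~$X$ (so that some $x \in X$ has $d(x, x_0) > 0$) yields $m = 0$, hence $\mu = 0$. This proves injectivity, and Theorem~\ref{2.18} then delivers the density statement (2) for free. I don't anticipate a real obstacle beyond the reduction trick of passing from $\MX$ to $\MzeroX$ via a point mass, which is the only place the hypothesis of strict quasihypermetricity can be brought to bear.
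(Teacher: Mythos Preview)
Your proof is correct, but it takes a different route from the paper's argument. Both proofs reduce via Theorem~\ref{2.18} to showing that $T$ is injective, and both must bridge the gap between the hypothesis (which controls only $\MzeroX$) and an arbitrary $\mu \in \MX$ with $d_\mu = 0$. The paper handles the case $\mu(X) \neq 0$ by normalising to $\overline{\mu} = \mu/\mu(X) \in \Mone(X)$ and then invoking the quasihypermetric inequality in the form $2I(\overline{\mu}, \nu) \geq I(\overline{\mu}) + I(\nu)$ (condition~(5) of Theorem~\ref{qhmconds}) with a probability measure~$\nu$ satisfying $I(\nu) > 0$, obtaining a contradiction; only then does it apply strict quasihypermetricity. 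Your approach instead subtracts a point mass to force the measure into $\MzeroX$ and computes $I(\mu_0)$ directly, exploiting the convenient vanishing $d_{\mu_0}(x_0) = 0$. Your argument is somewhat more elementary in that it never appeals to the equivalent characterisations of the quasihypermetric property in Theorem~\ref{qhmconds}; the paper's argument, on the other hand, separates cleanly into ``$d_\mu = 0 \Rightarrow \mu(X) = 0$'' followed by ``$\mu(X) = 0$ and $I(\mu) = 0 \Rightarrow \mu = 0$'', which perhaps makes the logical structure more transparent.
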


\begin{proof}
By Theorem~\ref{2.18}, it suffices to show that $T$ is injective.
Suppose that $d_\mu = 0$ for some $\mu \in \MX$.
If $\mu (X) \neq 0$, define $\overline{\mu} \in \Mone(X)$ by
setting $\overline{\mu} = \mu/\mu (X)$,
and choose $\nu \in \Moneplus(X)$ with $\Inu > 0$
(we may take $\nu = \frac{1}{2} (\delta_x + \delta_y)$
for any pair of distinct elements $x, y \in X$, for example).
Then since $d_{\overline{\mu}} = 0$ and
$2I(\overline{\mu}, \nu) \geq I(\overline{\mu}) + \Inu$
(Theorem~\ref{qhmconds}), we have $\Inu \leq 0$, a contradiction.
Therefore, $\mu(X) = 0$. But since $d_\mu = 0$ implies $\Imu = 0$,
the strictly quasihypermetric assumption now implies that $\mu = 0$, as required.
\end{proof}

We note that the assumption that $X$ is non-trivial is necessary:
if $X$ is singleton, it is easy to see that $\im T$ is not dense in $C(X)$.

\begin{example}
\label{nondense}
Consider again the quasihypermetric, non-strictly quasihypermetric
space $\Xd$, where $X$ is the circle of radius~$1$
and $d$ is the arc-length metric.
It is obvious that whenever $x$ and~$x'$ are diametrically
opposite points in~$X$, we have $d(x, y) + d(x', y) = \pi$
for all $y \in X$, and integration with respect to an arbitrary
measure $\mu \in \MX$ then yields $d_\mu(x) + d_\mu(x') = \pi \mu(X)$.
But the collection of functions $f \in C(X)$ such that
$f(x) + f(x')$ is constant for all diametrically opposite pairs of
points $x$ and~$x'$ is clearly a proper closed subspace of $C(X)$,
and since it contains $\im T$, the latter is not dense in $C(X)$.
\end{example}

\begin{remark}
\label{sqhmremark}
We note that Theorem~\ref{2.19} gives a very simple proof
of Theorem~\ref{newtheorem} in the case
of a strictly quasihypermetric space~$X$.
Indeed, if $\im T$ were finite-dimensional for such a space,
then $\im T$ would be both closed and dense in $C(X)$,
and hence equal to $C(X)$, and the finite-dimensionality of $C(X)$
would then imply that $X$ was finite.
\end{remark}

\section{Topologies on $\MX$ and its Subspaces}
\label{topologies}

Let $\Xd$ be a compact quasihypermetric space. In the proof of Theorem~\ref{qhmconds},
we noted in passing that a semi-inner product $(\cdot \mid \cdot)$
can be defined on the subspace $\MzeroX$ of $\MX$ of measures of
total mass~$0$ by the formula
\[
(\mu \mid \nu) = -\Imunu
\]
for $\mu, \nu \in \MzeroX$.
When $\MzeroX$ is equipped with this semi-inner product,
we will denote the resulting semi-inner product space by $\Ezero(X)$.
We note that the associated seminorm $\|\cdot\|$ on $\Ezero(X)$
is given by
\[
\| \mu \| = \bigl[ -\Imu \bigr]^\frac{1}{2}
\]
for $\mu \in \MzeroX$.
In referring to the topology of $\Ezero(X)$,
we will from here on always mean the topology induced
by this seminorm; other topologies on~$\MzeroX$---%
specifically, the topologies induced on $\MzeroX$
by the \wstar{} topology and the measure-norm topology on $\MX$---%
will be named explicitly.

It is clear that the above semi-inner product becomes
an inner product, and $\Ezero(X)$ an inner product space,
precisely when $\Xd$ is strictly quasihypermetric.

The use of functionals such as $I(\cdot, \cdot)$
to define a (semi-)inner product structure is a standard
procedure in potential theory (see \cite{Lan} and~\cite{Fug},
for example) and has also been explored in somewhat different
settings such as the study of irregularity of distribution
(\cite{Alex4} and~\cite{Alex3}) and distance geometry (see~\cite{LSW}).

Recall the definition of the constant $\mbar(X)$:
\[
\mbar(X) = \sup \Imu,
\]
where $\mu$ ranges over $\Mone(X)$. In the case when $\mbar(X)$
is finite, there is a natural extension of the semi-inner
product on $\Ezero(X) = \Mzero(X)$ to a semi-inner product
on the collection $\M(X)$ of all signed Borel measures on~$X$.
Specifically, we define
\[
(\mu \mid \nu) = (\mbar(X) + 1) \mu(X) \nu(X) -\Imunu
\]
for $\mu, \nu \in \MX$, and note that the
semi-inner product space axioms are straightforward to check.
Further, the new semi-inner product is once again an inner product
precisely when $X$ is a strictly quasihypermetric space.
It is easy to see that the new semi-inner product is indeed
an extension of the earlier one.
When $\MX$ is equipped with the extended semi-inner product,
we will denote the resulting semi-inner product space by $E(X)$.

\begin{remark}
\label{EXremark}
We note that if the term $\mbar(X) + 1$ in the definition
is replaced by $\mbar(X) + \epsilon$, for any $\epsilon > 0$,
then the expression still defines an extension
of the earlier semi-inner product, though working
with the initially given form will suffice for our purposes here.

It is straightforward to show that the induced norms
are equivalent for all~$\epsilon$, so that in particular the
metric and topological properties of $E(X)$ are independent of~$\epsilon$.
Further, the identity mapping on $\Mzero(X)$ can be extended to
an isomorphism between the corresponding semi-inner product spaces
if and only if there exists a measure
$\mu_0 \in \Mone(X)$ such that $d_{\mu_0}$ is a constant function.
(The existence of measures of this type will play an important role
in \cite{NW2} and~\cite{NW3} in our analysis of $\mbar(X)$.)
\end{remark}

We will later make extensive use of the semi-inner product 
space $\Ezero(X)$. We begin this in the next section
of this paper, and continue it in \cite{NW2} and~\cite{NW3},
where we will relate
the structure of $\Ezero(X)$ in a detailed way to the properties
of the constant $\mbar(X)$.
In this section, however, we wish to study
some of the properties of $\Ezero(X)$ as a topological space,
especially the question of the relation between
the topology of $\Ezero(X)$ and other topologies
induced on $\MzeroX$ as a subspace of $\MX$.
The other topologies that we discuss are the topology
induced on $\MX$ and its subspaces by the measure
norm $\| \cdot \|_\M$, and the \wstar{} topology.
The question of the completeness of $\Ezero(X)$
will be discussed later, in section~\ref{completeness}.

\begin{theorem}
\label{normineq}
Let $\Xd$ be a compact quasihypermetric space.
Then for $\mu \in \MzeroX$, we have
$\| \mu \| \leq (D(X)/2)^\frac{1}{2} \| \mu \|_\M$.
\end{theorem}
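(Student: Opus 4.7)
The plan is to use the Hahn--Jordan decomposition of $\mu$ together with bilinearity of $I(\cdot,\cdot)$ to reduce the claimed bound to very easy estimates on the diameter.

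First I would write $\mu = \alpha\mu_1 - \beta\mu_2$ with $\mu_1,\mu_2 \in \Moneplus(X)$ having disjoint supports and $\alpha,\beta \geq 0$, as described in Section~\ref{chapter1}. Because $\mu \in \MzeroX$, evaluating $\mu(X) = \alpha - \beta = 0$ forces $\alpha = \beta$. Then $\|\mu\|_\M = \alpha + \beta = 2\alpha$, so everything reduces to showing $-I(\mu) \leq 2 D(X)\,\alpha^{2}$.

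Next, I would expand by bilinearity: since $\mu = \alpha(\mu_1 - \mu_2)$,
\[
I(\mu) = \alpha^{2}\bigl(I(\mu_1) + I(\mu_2) - 2 I(\mu_1,\mu_2)\bigr),
\]
so
\[
-I(\mu) = \alpha^{2}\bigl(2 I(\mu_1,\mu_2) - I(\mu_1) - I(\mu_2)\bigr).
\]
The key observation is that the integrand $d(x,y)$ is non-negative and bounded above by $D(X)$, and both $\mu_1$ and $\mu_2$ are positive measures of total mass~$1$. Thus $I(\mu_1,\mu_2) \leq D(X)$ and $I(\mu_1), I(\mu_2) \geq 0$, which immediately gives $-I(\mu) \leq 2 D(X)\,\alpha^{2}$. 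Taking square roots and substituting $2\alpha = \|\mu\|_\M$ yields
\[
\|\mu\| = \bigl[-I(\mu)\bigr]^{1/2} \leq \alpha\sqrt{2 D(X)} = \bigl(D(X)/2\bigr)^{1/2}\|\mu\|_\M,
\]
as required. Note that the quasihypermetric hypothesis is used only implicitly, to ensure $-I(\mu) \geq 0$ so that the seminorm is well-defined; the inequality itself follows purely from the triangle/diameter estimates.

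There is no real obstacle here: the argument is entirely bookkeeping around the Hahn--Jordan decomposition, and the apparent factor of~$2$ in $-I(\mu) \leq 2D(X)\alpha^2$ is exactly what is absorbed when converting $\alpha$ back to $\|\mu\|_\M/2$. The only point requiring modest care is verifying that the total-mass-zero condition forces $\alpha = \beta$ so that the diagonal terms $I(\mu_1), I(\mu_2)$ (which we discard as non-negative) appear with the correct sign; once that is in place the bound follows in a single line.
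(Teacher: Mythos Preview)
Your proof is correct and follows essentially the same route as the paper's: both use the Hahn--Jordan decomposition, expand $-I(\mu)$ by bilinearity, discard the non-negative diagonal terms, and bound the cross term by the diameter. The only cosmetic difference is that you normalize to probability measures and exploit $\alpha=\beta$ directly, whereas the paper keeps $\mu^+,\mu^-$ unnormalized and closes with the AM--GM inequality $4\mu^+(X)\mu^-(X)\le(\mu^+(X)+\mu^-(X))^2$ (which is in fact an equality here since $\mu^+(X)=\mu^-(X)$).
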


\begin{proof}
Suppose that $\mu \in \MzeroX$ has Hahn--Jordan
decomposition $\mu = \mu^+ - \mu^-$. Then, since $\mu^+$
and~$\mu^-$ are positive measures, we have
\begin{eqnarray*}
\| \mu \|^2 
 & = & -\Imu \\*
 & = & -I(\mu^+ - \mu^-) \\
 & = & -I(\mu^+) - I(\mu^-) + 2 I(\mu^+, \mu^-) \\
 & \leq & 2 I(\mu^+, \mu^-) \\
 & = & 2 \int_X \! \int_X d(x, y) \, d\mu^+(x) d\mu^-(x) \\
 & \leq & 2 D(X) \mu^+(X) \mu^-(X) \\
 & \leq & (D(X)/2) \, \bigl(\mu^+(X) + \mu^-(X) \bigr)^2 \\*
 & = & (D(X)/2) \, \| \mu \|_\M^2,
\end{eqnarray*}
giving the result.
\end{proof}

\begin{corollary}
The topology of $\Ezero(X)$ is contained in the topology
induced on $\MzeroX$ by the measure norm on $\MX$.
\end{corollary}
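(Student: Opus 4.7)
The plan is to deduce this corollary directly from Theorem~\ref{normineq}, which provides the norm estimate $\| \mu \| \leq (D(X)/2)^{\frac{1}{2}} \| \mu \|_\M$ for all $\mu \in \MzeroX$. The containment of topologies is then essentially a formality: a seminorm dominated by another (semi)norm induces a weaker topology. The only thing I need to spell out is how to translate this into the language of open sets or nets.

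Concretely, I would argue as follows. Write $C = (D(X)/2)^{\frac{1}{2}}$. By Theorem~\ref{normineq}, for every $\mu_0 \in \MzeroX$ and every $\mu \in \MzeroX$ we have $\| \mu - \mu_0 \| \leq C \| \mu - \mu_0 \|_\M$. Consequently, given any basic open neighbourhood of $\mu_0$ in the $\Ezero(X)$ topology, namely a ball $B_{\Ezero}(\mu_0, r) = \{ \mu \in \MzeroX : \| \mu - \mu_0 \| < r \}$, the measure-norm ball $B_\M(\mu_0, r/C) = \{ \mu \in \MzeroX : \| \mu - \mu_0 \|_\M < r/C \}$ is contained in $B_{\Ezero}(\mu_0, r)$. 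Thus every $\Ezero(X)$-open neighbourhood of $\mu_0$ contains a measure-norm neighbourhood of $\mu_0$, which is precisely the statement that the $\Ezero(X)$ topology is contained in (is coarser than) the measure-norm topology on~$\MzeroX$.

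There is no real obstacle here; the content of the corollary is entirely packaged into Theorem~\ref{normineq}. The only stylistic choice is whether to present the argument via basic neighbourhoods as above, via the continuity of the identity map $(\MzeroX, \| \cdot \|_\M) \to (\Ezero(X), \| \cdot \|)$ (which is Lipschitz with constant~$C$ and hence continuous), or via convergent nets. Any of these is a two-line proof, and I would choose the neighbourhood version for maximum transparency.
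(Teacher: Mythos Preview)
Your proposal is correct and matches the paper's approach: the corollary is stated immediately after Theorem~\ref{normineq} with no explicit proof, as it follows directly from the seminorm domination $\|\mu\| \leq (D(X)/2)^{1/2}\|\mu\|_\M$. Your neighbourhood argument simply spells out this standard implication (with the trivial caveat that when $D(X)=0$ the space $\MzeroX$ is $\{0\}$ and there is nothing to prove).
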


\begin{remark}
We note that no better constant than $(D(X)/2)^\frac{1}{2}$
is in general possible in the inequality above. In any space
$\Xd$, let $x$ and~$y$ be two points in~$X$ such that
$d(x, y) = D(X)$, and set $\mu = \delta_x - \delta_y \in \MzeroX$.
Then it is easy to see that $\| \mu \|^2 = 2 d(x, y) = 2 D(X)$
and $\| \mu \|_\M = 2$, so that equality holds.

The argument above also shows, with minimal changes,
that if the support of~$\mu$ lies in a closed sphere of radius~$r$,
then $\| \mu \| \leq r^{1/2} \| \mu \|_\M$.
\end{remark}

We will prove that if $\Xd$ is a compact quasihypermetric space,
then the norm topology on $\Ezero(X)$ is incomparable
with the topology induced on $\MzeroX$ by the \wstar{}
topology on $\MX$ unless $X$ is finite.
One half of what we require is given by the following result.

\begin{theorem}
\label{seqexample}
Let $\Xd$ be an infinite compact quasihypermetric space.
Then there exists a sequence in $\MzeroX$ which
converges to\/~$0$ in $\Ezero(X)$ but does not converge in
the \wstar{} topology or in the measure-norm topology.
\end{theorem}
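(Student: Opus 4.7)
The plan is to exploit the fact that in an infinite compact metric space we can find pairs of distinct points arbitrarily close together, and to combine this with a diverging scalar factor so that the $E_0$-seminorm shrinks while the measure norm blows up.

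First, I would use the hypothesis that $X$ is infinite and compact to produce two sequences of points $\{x_n\}$ and $\{y_n\}$ in $X$ with $x_n \neq y_n$ and $d(x_n, y_n) \to 0$ as rapidly as desired --- for concreteness, $d(x_n, y_n) \le 1/n^3$. By the Bolzano--Weierstrass property of the compact metric space $X$, the infinite set $X$ has an accumulation point $x^* \in X$; a sequence of distinct points converging to $x^*$ then produces, by taking sufficiently late terms, pairs of distinct points at arbitrarily small distance.

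Next, I would define
\[
\mu_n = n(\delta_{x_n} - \delta_{y_n}) \in \MzeroX
\]
and compute the relevant norms. Using $I(\delta_x) = 0$ and $I(\delta_x, \delta_y) = d(x, y)$, the bilinearity of $I(\cdot,\cdot)$ gives
\[
I(\mu_n) = n^2 \bigl( I(\delta_{x_n}) + I(\delta_{y_n}) - 2 I(\delta_{x_n}, \delta_{y_n}) \bigr) = -2n^2 d(x_n, y_n),
\]
so $\| \mu_n \|^2 = -I(\mu_n) = 2 n^2 d(x_n, y_n) \le 2/n \to 0$, i.e.\ $\mu_n \to 0$ in $\Ezero(X)$. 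On the other hand, since the supports of $\delta_{x_n}$ and $\delta_{y_n}$ are disjoint, the Hahn--Jordan decomposition yields $\| \mu_n \|_\M = 2n \to \infty$.

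Finally, I would rule out convergence in the other two topologies by observing that $\| \mu_n \|_\M \to \infty$. Measure-norm convergence is impossible because any norm-convergent sequence is bounded. For the \wstar{} topology, if $\mu_n$ converged \wstar{} to some $\mu \in \MX$, then $\{\mu_n(f)\}$ would be bounded in $\R$ for every $f \in C(X)$, and the Banach--Steinhaus theorem (applied to $\mu_n$ as functionals on $C(X)$, exactly as in the proof of Corollary~\ref{cont2}) would force $\{\| \mu_n \|_\M\}$ to be bounded, a contradiction. The main obstacle is really just the point-selection step: once the pairs $(x_n, y_n)$ are in hand, the rest is a direct calculation, and the use of accumulation points is the only place where the hypothesis that $X$ is infinite enters.
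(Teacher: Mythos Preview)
Your proof is correct and follows essentially the same approach as the paper's: both construct measures of the form $\lambda_n(\delta_{a_n}-\delta_{b_n})$ with $d(a_n,b_n)\to 0$ and $\lambda_n\to\infty$ balanced so that $\lambda_n^2 d(a_n,b_n)\to 0$ while $\|\mu_n\|_\M=2\lambda_n\to\infty$, and both invoke the Banach--Steinhaus argument from Corollary~\ref{cont2} to rule out \wstar{} convergence. The only cosmetic difference is that the paper fixes one endpoint (taking $a_n=x$ for all~$n$ along a convergent sequence $x_n\to x$) and scales by $d(x,x_n)^{-1/3}$, whereas you pre-select pairs with $d(x_n,y_n)\le 1/n^3$ and scale by~$n$; the underlying idea is identical.
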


\begin{proof}
Since $X$ is infinite and compact, it contains
a non-trivial convergent sequence. Fix such
a sequence, say $x_n \to x$, 
in which the points~$x_n$ are all distinct from~$x$.
Write $c_n = d(x, x_n)$ for all $n \in \N$.

Define $\mu_n \in \MzeroX$ by setting
$\mu_n = c_n^{-1/3} (\delta_x - \delta_{x_n})$.
Then $\mu_n \to 0$ in $\Ezero(X)$, since
\[
\| \mu_n \|^2
 = -I(\mu_n)
 = 2 c_n^{-2/3} I(\delta_x, \delta_{x_n})
 = 2 c_n^{-2/3} d(x, x_n)
 = 2 c_n^{1/3}
 \to 0.
\]
But the argument in the proof of Corollary~\ref{cont2}
shows that if $\{ \mu_n \}$ converges \wstar{}
then the sequence of measure norms $\| \mu_n \|_\M$ must be bounded,
and since clearly $\| \mu_n \|_\M = 2 c_n^{-1/3} \to \infty$,
we conclude as required that $\{ \mu_n \}$ converges
neither \wstar{} nor in norm.
\end{proof}

\begin{corollary}
The topology of\/ $\Ezero(X)$ does not contain the \wstar{} topology
on $\MzeroX$, and is strictly weaker than the measure norm
topology on $\MzeroX$.
\end{corollary}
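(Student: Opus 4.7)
The plan is to derive both claims directly from the sequence $\{\mu_n\}$ constructed in Theorem~\ref{seqexample}, which converges to~$0$ in $\Ezero(X)$ but converges in neither the \wstar{} topology nor the measure-norm topology on $\MzeroX$.

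For the \wstar{} assertion, I would argue by contrapositive. Suppose the topology of $\Ezero(X)$ did contain the \wstar{} topology on $\MzeroX$; then $\Ezero(X)$ would be finer than \wstar{}, so every $\Ezero(X)$-convergent net (and in particular every convergent sequence) would also be \wstar{} convergent to the same limit. Applying this to $\{\mu_n\}$ would force $\mu_n \to 0$ weak-$*$, directly contradicting the conclusion of Theorem~\ref{seqexample}.

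For the measure-norm assertion, one direction is already in hand: the corollary immediately following Theorem~\ref{normineq} tells us that the topology of $\Ezero(X)$ is contained in the measure-norm topology, so it is at most as strong. To obtain strict weakness I would again invoke $\{\mu_n\}$: if the two topologies coincided, $\Ezero(X)$-convergence of $\{\mu_n\}$ to~$0$ would entail $\|\mu_n\|_\M \to 0$, which fails since $\|\mu_n\|_\M = 2 c_n^{-1/3} \to \infty$ (as computed in the proof of Theorem~\ref{seqexample}).

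I do not foresee any genuine obstacle; the corollary is essentially a re-reading of Theorem~\ref{seqexample} in topological language. The only mild subtlety is that the $\Ezero(X)$ topology is first-countable (being induced by a seminorm), so the convergent sequence $\{\mu_n\}$ is by itself sufficient to witness the failure of either topological containment claim.
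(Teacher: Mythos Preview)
Your proposal is correct and matches the paper's intended argument: the corollary is stated without proof in the paper, as it is meant to follow immediately from Theorem~\ref{seqexample} (for both non-containment claims) together with the corollary to Theorem~\ref{normineq} (for the ``at most as strong'' direction in the measure-norm assertion). Your closing remark about first-countability is harmless but unnecessary: a single sequence that converges in one topology but not in another already witnesses non-containment, without any countability hypothesis on either topology.
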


By Corollary~\ref{discont1}, there exists a \wstar{}
convergent net $\mu_\alpha \to 0$ in $\MzeroX$
such that $I(\mu_\alpha) \not\to 0$ in~$\R$; but
$\| \mu_\alpha \| = \bigl[ -I(\mu_\alpha) \bigr]^\frac{1}{2}$
by definition, so we have $\mu_\alpha \not\to 0$ in $\Ezero(X)$.
Thus, we have:

\begin{corollary}
\label{incomp2}
The topology of $\Ezero(X)$ is not contained in the \wstar{}
topology on $\MzeroX$.
\end{corollary}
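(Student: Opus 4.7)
The plan is a short contradiction argument relying on Corollary~\ref{discont1}, which has already established that $I(\cdot)$ restricted to $\MzeroX$ is weak-$*$ discontinuous at every point. The statement to be proved asserts that the $\Ezero(X)$-topology is not coarser than the topology induced on $\MzeroX$ by the weak-$*$ topology on $\MX$; equivalently, that some weak-$*$ convergent net in $\MzeroX$ fails to converge in $\Ezero(X)$.

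The key observation linking the two is the definition of the seminorm on $\Ezero(X)$, namely $\|\mu\|^2 = -I(\mu)$ for $\mu \in \MzeroX$. Under this identity, convergence in $\Ezero(X)$ of a net $\mu_\alpha$ to $0$ is literally the same as $I(\mu_\alpha) \to 0$. So suppose for contradiction that the $\Ezero(X)$-topology is contained in the weak-$*$ topology on $\MzeroX$. Then every weak-$*$ convergent net in $\MzeroX$ must also converge in $\Ezero(X)$ to the same limit; in particular, any weak-$*$ null net $\mu_\alpha \to 0$ in $\MzeroX$ would satisfy $\|\mu_\alpha\| \to 0$, hence $I(\mu_\alpha) \to 0 = I(0)$. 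This would say exactly that $I(\cdot)|_{\MzeroX}$ is weak-$*$ continuous at $0 \in \MzeroX$, which directly contradicts Corollary~\ref{discont1}.

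To make the contradiction explicit, apply Corollary~\ref{discont1} at the point $0$: it supplies a weak-$*$ convergent net $\mu_\alpha \to 0$ in $\MzeroX$ for which $I(\mu_\alpha) \not\to 0$ in $\R$. By the seminorm identity, $\|\mu_\alpha\| = [-I(\mu_\alpha)]^{1/2} \not\to 0$, so this net converges weak-$*$ in $\MzeroX$ without converging in $\Ezero(X)$, and this single net is already enough to witness the conclusion.

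There is no genuine obstacle here; all the substantive work was done in establishing the discontinuity of $I(\cdot)$, and the present corollary is essentially a topological repackaging of that fact via the formula $\|\mu\| = [-I(\mu)]^{1/2}$.
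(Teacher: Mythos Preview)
Your proof is correct and follows exactly the paper's approach: invoke Corollary~\ref{discont1} to obtain a weak-$*$ null net $\mu_\alpha \to 0$ in $\MzeroX$ with $I(\mu_\alpha) \not\to 0$, and then use the seminorm identity $\|\mu_\alpha\| = [-I(\mu_\alpha)]^{1/2}$ to conclude that $\mu_\alpha \not\to 0$ in $\Ezero(X)$. The only difference is cosmetic: you frame it first as a contradiction argument and then give the direct witness, whereas the paper simply exhibits the net directly in the text preceding the corollary.
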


We now have the result claimed earlier.

\begin{theorem}
\label{incomparable}
If $\Xd$ is an infinite compact quasihypermetric space,
then the topology of $\Ezero(X)$ and the \wstar{} topology
on $\MzeroX$ are incomparable.
\end{theorem}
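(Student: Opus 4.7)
The plan is to observe that incomparability of two topologies amounts to showing that neither contains the other, and that both directions have in effect already been established in this section. We simply combine them.

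For one direction, Corollary~\ref{incomp2} directly tells us that the topology of $\Ezero(X)$ is not contained in the \wstar{} topology on $\MzeroX$: the net produced by Corollary~\ref{discont1} converges \wstar{} to $0$ in $\MzeroX$ but has $I(\mu_\alpha) \not\to 0$, hence $\|\mu_\alpha\| \not\to 0$, so the identity $(\MzeroX, w^*) \to \Ezero(X)$ is not continuous at $0$. This precludes any containment $\Ezero(X) \supseteq w^*$ at the level of open sets, in the sense that the $\Ezero(X)$-neighbourhoods of $0$ are not \wstar{}-open.

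For the reverse direction, the corollary immediately following Theorem~\ref{seqexample} records that the topology of $\Ezero(X)$ does not contain the \wstar{} topology on $\MzeroX$. This is witnessed by the explicit sequence $\mu_n = c_n^{-1/3}(\delta_x - \delta_{x_n})$ from Theorem~\ref{seqexample}, which converges to $0$ in $\Ezero(X)$ while $\|\mu_n\|_\M = 2c_n^{-1/3} \to \infty$, so by the uniform boundedness argument recalled in Corollary~\ref{cont2} it fails to converge \wstar{}; hence the identity map $\Ezero(X) \to (\MzeroX, w^*)$ is not continuous at $0$, ruling out $\Ezero(X) \supseteq w^*$.

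Combining these two facts shows neither of the two topologies is contained in the other, which is precisely the incomparability asserted by the theorem. There is no real obstacle to overcome: all the serious work was done in Theorem~\ref{seqexample} (to construct a concrete $\Ezero(X)$-null but \wstar{}-divergent sequence) and in Corollary~\ref{discont1} (to produce a \wstar{}-null net on which $I$ blows up), so the proof at this point is simply an assembly of those two results.
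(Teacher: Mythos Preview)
Your approach is correct and matches the paper's: Theorem~\ref{incomparable} is stated without a separate proof in the paper, being simply the conjunction of the corollary following Theorem~\ref{seqexample} and Corollary~\ref{incomp2}. You have identified exactly these two inputs and assembled them correctly.

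One notational slip to fix: in your first paragraph you conclude ``This precludes any containment $\Ezero(X) \supseteq w^*$,'' but the surrounding sentences (correctly) argue that $\Ezero(X)$-neighbourhoods of~$0$ are not \wstar{}-open, i.e.\ that the $\Ezero(X)$ topology is not \emph{contained in} the \wstar{} topology. In the usual convention where $\tau_1 \supseteq \tau_2$ means $\tau_1$ is finer, what you have ruled out there is $w^* \supseteq \Ezero(X)$, not $\Ezero(X) \supseteq w^*$. As written, both paragraphs end with the same symbol ``$\Ezero(X) \supseteq w^*$,'' which cannot be right if together they are to yield incomparability. The mathematics is fine; just reverse the containment symbol in the first paragraph (or drop the symbolic shorthand altogether, since your prose already states each direction accurately).
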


\begin{remark}
\label{incomparableremark}
As the discussion above shows, the convergence of a net \wstar{}
in $\MzeroX$ does not imply the convergence of the net with respect
to the semi-inner product space topology of $\Ezero(X)$.
It is therefore worth noting that if $\mu_n \to \mu$
is a \wstar{} convergent sequence in $\MzeroX$,
then we also have $\mu_n \to \mu$ in $\Ezero(X)$.
Further, if $\mbar(X) < \infty$, then \wstar{} convergence of
an arbitrary sequence in $\MX$ implies its convergence
with respect to the topology of the semi-inner product space $E(X)$.
These statements can be proved straightforwardly using Corollary~\ref{cont2}
and Theorem~\ref{sepcont}.
\end{remark}

\section{$\mbar(X)$ and the Properties of $\Ezero(X)$}
\label{chapter2}

Let $\Xd$ be a compact quasihypermetric space.
As noted in section~\ref{topologies},
we can define the following semi-inner product and seminorm on $\Mzero(X)$:
\[
(\mu \mid \nu) : = -\Imunu,
\quad
\| \mu \| : = (\mu \mid \mu)^\frac{1}{2}
\]
for $\mu, \nu \in \Mzero(X)$. Recall also from section~\ref{topologies}
that $\Ezero(X)$ denotes $\Mzero(X)$ equipped with this semi-inner product,
and that $\Ezero(X)$ is an inner product space if and only if $X$
is strictly quasihypermetric. We begin by collecting
some elementary properties of $\Ezero(X)$.

\begin{lemma}
\label{2.7}
Let $\Xd$ be a compact quasihypermetric space. Then we have the following.
\begin{enumerate}
\item[(1)]
$|\Imunu|
 =
|( \mu \mid \nu )| \leq \| \mu \| \cdot \| \nu \|$ for all $\mu, \nu \in \Ezero(X)$.
\item[(2)]
$F = \{ \mu \in \Ezero(X) : \|\mu\| = 0 \}$ is a linear subspace of $\Ezero(X)$.
\item[(3)]
$F = \{\mu \in \Ezero(X) :
    (\mu \mid \nu) = 0 \mbox{ for all } \nu \in \Ezero(X)\}$.
\item[(4)]
With $(\nu_1 + F \mid \nu_2 + F): = (\nu_1 \mid \nu_2)$,
the quotient space $\Ezero(X)/F$ becomes an inner product space.
\item[(5)]
$F = \{\mu \in \Ezero(X) : d_\mu \mbox{ is a constant function}\}$.
\item[(6)]
If there exist $\varphi \in \Mone(X)$ and $c \geq 0$ such that
$|I (\varphi, \nu)| \leq c \| \nu \|$ for all $\nu \in \Ezero(X)$,
then for each $\mu \in \MX$ there exists
$c_\mu \geq 0$ such that $|\Imunu| \leq c_\mu \| \nu \|$
for all $\nu \in \Ezero(X)$.
\item[(7)]
If there exist $\mu_0 \in \Moneplus(X)$ and $c \geq 0$ such that
$|I(\mu_0, \nu)| \leq c \| \nu \|$ for all $\nu \in \Ezero(X)$,
then there exists $K \geq 0$ such that $|\Imunu| \leq K \| \nu \|$
for all $\nu \in \Ezero(X)$ and for all $\mu \in \Moneplus(X)$.
\end{enumerate}
\end{lemma}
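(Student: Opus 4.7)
The plan is to prove the seven parts in order, treating (1)--(4) as the standard semi-inner-product package, (5) as a Hahn--Banach-free direct computation, and (6)--(7) as ``decomposition'' arguments that reduce an arbitrary $\mu$ to a known reference measure plus something in $\MzeroX$.

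For (1), I will derive Cauchy--Schwarz in the usual way: the quadratic
\[
0 \leq \| \mu + t\nu \|^2 = \|\mu\|^2 + 2t(\mu \mid \nu) + t^2 \|\nu\|^2
\]
in $t \in \R$ must have nonpositive discriminant (handling the degenerate case $\|\nu\| = 0$ by letting $t \to \pm \infty$). Once (1) is in hand, (2) follows because $\|\mu+\nu\|^2 \leq 2|(\mu \mid \nu)| \leq 2\|\mu\|\|\nu\| = 0$ when $\|\mu\|=\|\nu\|=0$, and scalar homogeneity of the seminorm does the rest; (3) is immediate from (1) in one direction and from setting $\nu = \mu$ in the other; (4) is the standard quotient, with well-definedness of the inner product on cosets following from (3).

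For (5), the key observation is that $(\mu \mid \nu) = -\int_X d_\mu \, d\nu$. If $d_\mu$ is constant, then $\int d_\mu \, d\nu = d_\mu \cdot \nu(X) = 0$ for every $\nu \in \MzeroX$, so $\mu \in F$ by (3). Conversely, if $\mu \in F$, I test against $\nu = \delta_x - \delta_y \in \MzeroX$ for arbitrary $x,y \in X$, giving $d_\mu(x) = d_\mu(y)$; no Hahn--Banach is needed because the point-mass differences already separate values of $d_\mu$.

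For (6), given $\mu \in \MX$ with $m := \mu(X)$, I write $\mu = m\varphi + (\mu - m\varphi)$, where the second summand lies in $\MzeroX$. Bilinearity gives $\Imunu = m\, I(\varphi, \nu) + (\mu - m\varphi \mid \nu) \cdot (-1)$... more cleanly, $|I(\mu, \nu)| \leq |m| \cdot c \cdot \|\nu\| + \|\mu - m\varphi\| \cdot \|\nu\|$ by hypothesis and (1), so $c_\mu := |m|c + \|\mu - m\varphi\|$ works. For (7), the same decomposition $\mu = \mu_0 + (\mu - \mu_0)$ is used for $\mu \in \MoneplusX$, and the remaining task is to bound $\|\mu - \mu_0\|$ \emph{uniformly} in $\mu$. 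This is where Theorem~\ref{normineq} does the work: $\|\mu - \mu_0\| \leq (D(X)/2)^{1/2} \|\mu - \mu_0\|_\M \leq (D(X)/2)^{1/2} \cdot 2 = (2D(X))^{1/2}$, since both $\mu$ and $\mu_0$ are probability measures. Thus $K := c + (2D(X))^{1/2}$ suffices.

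The only step that requires any care is the uniform bound in (7); everything else is either routine Hilbert-space manipulation or direct integration against point-mass differences. The hypothesis $\mu_0 \in \MoneplusX$ (rather than merely $\MoneX$) in (7) is what makes the uniform control of $\|\mu - \mu_0\|_\M$ over $\MoneplusX$ possible via the triangle inequality for the measure norm.
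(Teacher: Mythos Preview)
Your proof is correct and follows essentially the same approach as the paper: parts (1)--(4) are dismissed as standard semi-inner-product facts, (5) is handled by testing against $\delta_x - \delta_y$, and (6)--(7) use the same decomposition into a reference measure plus a mass-zero remainder bounded via~(1). The one minor difference is in~(7): you invoke Theorem~\ref{normineq} to get the uniform bound $\|\mu - \mu_0\| \leq (2D(X))^{1/2}$, whereas the paper computes $\|\mu - \mu_0\|^2 = 2I(\mu,\mu_0) - I(\mu) - I(\mu_0) \leq 2D(X) - I(\mu_0)$ directly from the definition (using $I(\mu) \geq 0$ for probability measures), yielding the slightly sharper constant $K = c + (2D(X) - I(\mu_0))^{1/2}$; both arguments are equally valid for the stated conclusion.
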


\begin{proof}
It is well known that (1), (2), (3) and~(4) hold
in all semi-inner product spaces.

\smallskip
(5)
Let $\mu$ be in~$F$. Part (3) implies that
$I(\mu, \delta_x - \delta_y) = 0$ for all $x, y \in X$, and hence that
$d_\mu(x) = d_\mu(y)$ for all $x, y \in X$.
Conversely, if $d_\mu$ is a constant function,
then it is easy to check that $\Imu = \mu(X) = 0$, giving $\| \mu \| = 0$.

\smallskip
(6)
Consider $\varphi \in \Mone(X)$ and $c \geq 0$ such that
$|I(\varphi, \nu)| \leq c \| \nu \|$ for all $\nu \in \Ezero(X)$,
and let $\mu \in \MX$. If $\mu(X) = 0$, then the
assertion follows by~(1). If $\mu (X) \neq 0$, then we have
\begin{eqnarray*}
\left|I \left( \frac{\mu}{\mu (X)}, \nu \right) \right|
 & \leq &
\left|I \left( \frac{\mu}{\mu (X)} - \varphi, \nu \right) \right|
  + \bigl| I(\varphi, \nu) \bigr| \\
 & \leq &
\left( \left\| \frac{\mu}{\mu (X)}
  - \varphi \right\| + c \right) \cdot \| \nu \|,
\end{eqnarray*}
and hence
$\bigl| I (\mu, \nu) \bigr|
 \leq
\bigl( \| \mu - \mu(X) \varphi \| + c |\mu(X)| \bigr) \| \nu \|$
for all $\nu \in \Ezero(X)$.

\smallskip
(7)
Consider $\mu_0 \in \Moneplus(X)$ and $c \geq 0$ such
that $\bigl| I(\mu_0, \nu) \bigr| \leq c \| \nu \|$
for all $\nu \in \Ezero(X)$. Then for $\mu \in \Moneplus(X)$, we have
\begin{eqnarray*}
\bigl| \Imunu \bigr|
 & \leq &
\bigl| I(\mu - \mu_0, \nu) \bigr| + \bigl| I(\mu_0, \nu) \bigr| \\
 & \leq &
\| \mu-\mu_0 \| \cdot \| v \|  + c \| v \| \\
 &   =  &
\Bigl[ \bigl( 2I (\mu, \mu_0) - \Imu - I(\mu_0) \bigr)^\frac{1}{2} + c \Bigr]
\cdot \| \nu \| \\
 & \leq &
\Bigl[ \bigl( 2 I (\mu, \mu_0) - I(\mu_0) \bigr)^{\frac{1}{2}} + c \Bigr]
\cdot \| \nu \| \\
 & \leq &
\| \nu\| \cdot \Bigl[ \bigl( 2D(X) - I(\mu_0) \bigr)^\frac{1}{2} + c \Bigr],
\end{eqnarray*}
for all $\nu \in \Ezero(X)$.
\end{proof}

\begin{theorem}
\label{2.9.5}
Let $X$ be a compact quasihypermetric space.
If there exist $\mu \in \Mzero(X)$ and $c \neq 0$ such that
$d_\mu(x) = c$ for all $x \in X$, then
\begin{enumerate}
\item[(1)]
$X$ is not strictly quasihypermetric, and
\item[(2)]
$\mbar(X) = \infty$.
\end{enumerate}
\end{theorem}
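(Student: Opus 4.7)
My plan is to prove both parts using the same measure $\mu$ and the elementary bilinearity of $I$, building a one-parameter family in $\Mone(X)$ whose $I$-values diverge.

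For part (1), I would first note that $\mu$ cannot be the zero measure, since $d_0 \equiv 0$ whereas $d_\mu \equiv c \neq 0$. Next I would compute
\[
I(\mu) = \int_X d_\mu \, d\mu = c \cdot \mu(X) = 0,
\]
using $\mu \in \Mzero(X)$. Thus $\mu$ is a nonzero element of $\Mzero(X)$ with $I(\mu) = 0$, which by Definition~\ref{sqhmdefn} says $X$ is not strictly quasihypermetric. (This is essentially Lemma~\ref{2.7}(5) together with the observation that $c \neq 0$ forces $\mu \neq 0$.)

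For part (2), I would mimic the strategy of Theorem~\ref{2.6}: fix any $\nu_0 \in \Moneplus(X)$ (for concreteness $\nu_0 = \delta_{x_0}$), and for each $t \in \R$ set $\mu_t = t\mu + \nu_0$. Since $\mu(X) = 0$ and $\nu_0(X) = 1$, one has $\mu_t \in \Mone(X)$. The cross term is
\[
I(\mu, \nu_0) = \int_X d_\mu \, d\nu_0 = c \cdot \nu_0(X) = c,
\]
so by bilinearity of $I$ together with $I(\mu) = 0$,
\[
I(\mu_t) = t^2 I(\mu) + 2t I(\mu, \nu_0) + I(\nu_0) = 2tc + I(\nu_0).
\]
Letting $t \to +\infty$ if $c > 0$ and $t \to -\infty$ if $c < 0$ drives $I(\mu_t) \to \infty$, so $\mbar(X) = \infty$.

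There is no serious obstacle, the only subtle point being that $\mu$ on its own is useless for driving $I$ to infinity (since $I(\mu) = 0$, the naive scaling $n\mu$ stays bounded in $I$), so one must introduce an auxiliary probability measure $\nu_0$ to exploit the nonzero cross term $I(\mu, \nu_0) = c$. This is exactly the same $\delta_x$-translation trick used in the proof of Theorem~\ref{2.6}, adapted to a situation where $I(\mu)$ vanishes rather than being strictly positive.
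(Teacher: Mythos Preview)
Your proof is correct and follows essentially the same approach as the paper: both establish $I(\mu)=0$ for part~(1), and for part~(2) both build a one-parameter family $t\mu + \nu_0$ in $\Mone(X)$ whose $I$-value is linear in~$t$ because the quadratic term vanishes while the cross term $I(\mu,\nu_0)=c\neq 0$. The only difference is cosmetic: the paper first passes through the Hahn--Jordan decomposition $\mu=\alpha\mu_1-\alpha\mu_2$ and takes $\nu_0=\mu_2$, whereas you work directly with~$\mu$ and an arbitrary $\nu_0\in\Moneplus(X)$, which is slightly cleaner.
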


\begin{proof}
Let $\mu \in \Mzero(X)$ and $c \neq 0$ be such that
$d_\mu(x) = c$ for all $x \in X$.

\smallskip
(1)
Clearly, $\mu \neq 0$ and $\Imu = 0$, and so $X$ is not strictly quasihypermetric.

\smallskip
(2) 
Write $\mu = \alpha\mu_{1} - \beta\mu_{2}$, where $\alpha, \beta \geq 0$
and $\mu_1, \mu_2 \in \Moneplus(X)$.
Since $\mu \neq 0$ and $\mu(X) = 0$, we have $\alpha = \beta \neq 0$.
Now $c = d_\mu(x) = \alpha d_{\mu_{1}-\mu_{2}}(x)$ for all $x \in X$.
Hence $d_{\mu_1-\mu_2}(x) = K$ for all $x \in X$,
where $K := c/\alpha \neq 0$.
For each $n \geq 1$, define $\nu_n \in \Mone(X)$ by setting
$\nu_n = n \sign K(\mu_1 - \mu_2) + \mu_2$.
Then, since $I(\mu_1 - \mu_2) = (1/\alpha^2) \Imu = 0$, we have
\[
\begin{array}{rcl}
I(\nu_n)
 & = & n^2 I(\mu_1 - \mu_2) + 2 n \sign K I(\mu_1 - \mu_2, \mu_2) + I(\mu_2) \\
 & = & 2n |K| + I(\mu_2) \\
 & \to & \infty
\end{array}
\]
as $n \to \infty$, and so $\mbar(X) = \infty$.
\end{proof}

Recall from section~\ref{topologies} that in the case when $\mbar(X)$\
is finite, there is a natural extension of the semi-inner product
on $\Ezero(X)$ to a semi-inner product on the whole of $\MX$,
which we then denote by $E(X)$, given by
\[
(\mu \mid \nu) = (\mbar(X) + 1) \mu(X) \nu(X) -\Imunu
\]
for $\mu, \nu \in \MX$.
In the following results, we find that a great deal
of extra information about the spaces and operators under
consideration becomes available under the assumption that
$\mbar(X)$ is finite.

\begin{theorem}
\label{2.10}
Let $\Xd$ be a compact quasihypermetric space.
Then the following conditions are equivalent.
\begin{enumerate}
\item[(1)]
$\mbar(X) < \infty$.
\item[(2)]
There exist $\mu \in \Mone(X)$ and $c \geq 0$
such that $|\Imunu| \leq c \, \| \nu \|$
for all $\nu \in \Ezero(X)$.
\item[(3)]
For all $\mu \in \MX$, there exists $c_\mu \geq 0$
such that $|\Imunu| \leq c_\mu \, \| \nu \|$
for all $\nu \in \Ezero(X)$.
\item[(4)]
There exists $K \geq 0$ such that $|\Imunu| \leq K \, \| \nu \|$
for all $\nu \in \Ezero(X)$ and for all $\mu \in \Moneplus(X)$.
\item[(5)]
There exists $c \geq 0$ such that $\| d_\nu\|_{\infty} \leq c \, \| \nu \|$
for all $\nu \in \Ezero(X)$.
\item[(6)]
There exists $c \geq 0$ such that
$|I(\mu_1) - I(\mu_2) |\leq c \, \| \mu_1 - \mu_2 \|$
for all $\mu_1, \mu_2 \in \Moneplus(X)$.
\end{enumerate}
\end{theorem}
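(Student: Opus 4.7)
The plan is to establish the cycle $(1) \Rightarrow (2) \Rightarrow (3) \Rightarrow (4) \Rightarrow (1)$ together with the two side equivalences $(4) \Leftrightarrow (5)$ and $(4) \Leftrightarrow (6)$, after which all six conditions will be mutually equivalent. The implications $(2) \Rightarrow (3)$ and $(3) \Rightarrow (4)$ are essentially covered by Lemma~\ref{2.7}: the first is exactly Lemma~\ref{2.7}(6), and for the second one applies Lemma~\ref{2.7}(7) with $\mu_0 = \delta_x \in \Moneplus(X)$, the required hypothesis being furnished by~(3) applied to~$\delta_x$. The equivalence $(4) \Leftrightarrow (5)$ is routine: $(4) \Rightarrow (5)$ because $\delta_x \in \Moneplus(X)$ gives $|d_\nu(x)| = |I(\delta_x, \nu)| \leq K\|\nu\|$ uniformly in~$x$, and conversely, for $\mu \in \Moneplus(X)$,
\[
|\Imunu| = \biggl| \int_X d_\nu \, d\mu \biggr| \leq \|d_\nu\|_\infty \|\mu\|_\M \leq c\|\nu\|.
\]
The substantive steps are $(1) \Rightarrow (2)$, $(4) \Rightarrow (1)$, and $(6) \Rightarrow (4)$.

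For $(1) \Rightarrow (2)$, I would fix any $\mu \in \Moneplus(X)$ and exploit that $\mu + t\nu \in \Mone(X)$ for every $\nu \in \Ezero(X)$ and $t \in \R$. Expanding $I(\mu + t\nu) \leq \mbar(X)$ as $I(\mu) + 2t\Imunu - t^2\|\nu\|^2 \leq \mbar(X)$ and optimizing over~$t$ at $t = \Imunu/\|\nu\|^2$ (when $\|\nu\| > 0$) gives $\Imunu^2 \leq (\mbar(X) - I(\mu))\|\nu\|^2 \leq \mbar(X)\|\nu\|^2$, where the last inequality uses $I(\mu) \geq 0$ for positive~$\mu$; this yields~(2) with $c = \mbar(X)^{1/2}$. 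The implication $(4) \Rightarrow (1)$ uses the same parabolic idea in reverse: picking any $x \in X$ and noting $\tilde\mu - \delta_x \in \Ezero(X)$ for $\tilde\mu \in \Mone(X)$, we obtain
\[
I(\tilde\mu) = 2I(\delta_x, \tilde\mu - \delta_x) - \|\tilde\mu - \delta_x\|^2 \leq 2K\|\tilde\mu - \delta_x\| - \|\tilde\mu - \delta_x\|^2 \leq K^2,
\]
whence $\mbar(X) \leq K^2 < \infty$.

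This leaves $(4) \Leftrightarrow (6)$. The direction $(4) \Rightarrow (6)$ is immediate from bilinearity: $I(\mu_1) - I(\mu_2) = I(\mu_1, \mu_1 - \mu_2) + I(\mu_2, \mu_1 - \mu_2)$, and since $\mu_1 - \mu_2 \in \Ezero(X)$, two applications of~(4) bound this by $2K\|\mu_1 - \mu_2\|$. The reverse implication $(6) \Rightarrow (4)$ is the main obstacle: (6) controls differences only among positive measures, while~(4) demands a uniform bound on the mixed quantity $\Imunu$ for arbitrary $\nu \in \Ezero(X)$, so the latter must be extracted somehow. I would do this by an averaging trick. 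Given $\mu \in \Moneplus(X)$ and non-zero $\nu \in \Ezero(X)$, use Hahn--Jordan to write $\nu = \alpha(\mu_1 - \mu_2)$ with $\mu_1, \mu_2 \in \Moneplus(X)$ and $\alpha > 0$ (so $\|\nu\| = \alpha\|\mu_1 - \mu_2\|$), and introduce the midpoints $\mu_i' = (\mu + \mu_i)/2 \in \Moneplus(X)$, which satisfy $\|\mu_1' - \mu_2'\| = \|\mu_1 - \mu_2\|/2$. Expanding by bilinearity yields
\[
I(\mu_1') - I(\mu_2') = \tfrac{1}{2}I(\mu, \mu_1 - \mu_2) + \tfrac{1}{4}(I(\mu_1) - I(\mu_2)),
\]
and applying~(6) to both pairs $(\mu_1', \mu_2')$ and $(\mu_1, \mu_2)$ and rearranging gives $|I(\mu, \mu_1 - \mu_2)| \leq \tfrac{3c}{2}\|\mu_1 - \mu_2\|$; hence $|\Imunu| = \alpha|I(\mu, \mu_1 - \mu_2)| \leq \tfrac{3c}{2}\|\nu\|$, establishing~(4) with $K = 3c/2$.
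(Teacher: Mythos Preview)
Your proof is correct, and in several places is more streamlined than the paper's own argument, so a brief comparison is worthwhile.

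The paper establishes the same cycle among (2), (3), (4) via Lemma~\ref{2.7}, proves $(4)\Leftrightarrow(5)$ essentially as you do, and proves $(4)\Rightarrow(6)$ by the same bilinear splitting. The differences lie in the three nontrivial implications. For $(1)\Rightarrow(2)$, the paper argues by contradiction: assuming no bound exists for $\mu=\delta_x$, it produces $\nu_n$ with $|I(\delta_x,\nu_n)|>n\|\nu_n\|$, checks (via Theorem~\ref{2.9.5}) that $\|\nu_n\|>0$, and builds $\mu_n\in\Mone(X)$ with $I(\mu_n)\to\infty$. Your direct maximization of the quadratic $t\mapsto I(\mu+t\nu)$ is cleaner and even yields the explicit constant $c=\mbar(X)^{1/2}$; you should, however, record the one-line treatment of the case $\|\nu\|=0$ (the linear map $t\mapsto I(\mu)+2tI(\mu,\nu)$ is bounded above only if $I(\mu,\nu)=0$). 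For $(4)\Rightarrow(1)$, the paper decomposes $\mu\in\Mone(X)$ via Hahn--Jordan as $\alpha\mu_1-\beta\mu_2$ and does a case analysis on whether $I(\mu_1-\mu_2)$ vanishes, completing the square when it does not; your observation that $I(\tilde\mu)=2I(\delta_x,\tilde\mu-\delta_x)-\|\tilde\mu-\delta_x\|^2\leq K^2$ is again shorter and gives the sharper bound $\mbar(X)\leq K^2$ (versus $K^2+M^+(X)$ in the paper).

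The most substantive divergence is in closing the loop through~(6). The paper proves $(6)\Rightarrow(1)$ directly, again via Hahn--Jordan and a somewhat delicate case analysis on the signs of $I(\mu_i,\mu_1-\mu_2)$, invoking Lemma~\ref{2.7}(5) to handle the degenerate case $\|\mu_1-\mu_2\|=0$. Your route $(6)\Rightarrow(4)$ via the midpoint measures $\mu_i'=(\mu+\mu_i)/2$ avoids all case distinctions: the identity
\[
I(\mu_1')-I(\mu_2')=\tfrac12 I(\mu,\mu_1-\mu_2)+\tfrac14\bigl(I(\mu_1)-I(\mu_2)\bigr)
\]
isolates the mixed term $I(\mu,\mu_1-\mu_2)$ using only two applications of~(6), and scaling by $\alpha$ finishes. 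This averaging trick is a genuinely different idea and is both shorter and more robust than the paper's argument.
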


\begin{theorem}
\label{2.10.5}
Let $\Xd$ be a compact quasihypermetric space, and assume that
$\mbar(X) < \infty$.
Then
\begin{enumerate}
\item[(1)]
$|\mu(X)| \leq \| \mu \|$ for all $\mu \in E(X)$, and
\item[(2)]
there exists $c \geq 0$ such that
$\| d_\mu \|_\infty \leq c \,  \| \mu \|$ for all $\mu \in E(X)$.
\end{enumerate}
\end{theorem}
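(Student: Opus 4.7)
The plan is to unpack the definition of the extended semi-inner product and reduce statement~(2) to the already-established norm estimate on $\Ezero(X)$ given by Theorem~\ref{2.10} part~(5). Recall that for $\mu \in E(X)$ we have
\[
\| \mu \|^2
 = (\mbar(X)+1) \mu(X)^2 - \Imu.
\]

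For part~(1), I would split on whether $\mu(X) = 0$. If $\mu(X) = 0$, then $|\mu(X)| = 0$ and $\|\mu\|^2 = -\Imu \geq 0$ by the quasihypermetric property (condition~(3) of Theorem~\ref{qhmconds}), so the inequality is trivial. If $\mu(X) \neq 0$, then $\mu/\mu(X) \in \Mone(X)$, so by definition of $\mbar(X)$,
\[
\Imu = \mu(X)^2 \, I\!\bigl(\mu/\mu(X)\bigr) \leq \mbar(X)\,\mu(X)^2,
\]
and substituting into the formula above gives $\|\mu\|^2 \geq (\mbar(X)+1)\mu(X)^2 - \mbar(X)\mu(X)^2 = \mu(X)^2$, as required.

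For part~(2), I would fix an arbitrary point $x_0 \in X$ and, for each $\mu \in \MX$, decompose
\[
\mu = \nu_\mu + m_\mu \delta_{x_0}, \qquad
\text{where } m_\mu := \mu(X) \text{ and } \nu_\mu := \mu - m_\mu \delta_{x_0} \in \MzeroX.
\]
By linearity of $T$, $d_\mu = d_{\nu_\mu} + m_\mu d_{\delta_{x_0}}$, and hence
\[
\| d_\mu \|_\infty \leq \| d_{\nu_\mu} \|_\infty + |m_\mu| \cdot \| d_{\delta_{x_0}} \|_\infty.
\]
Since $\| d_{\delta_{x_0}} \|_\infty \leq D(X)$, the second term is controlled once I bound $|m_\mu|$, and for the first term I can invoke Theorem~\ref{2.10}(5) (which applies because $\mbar(X) < \infty$) to produce a constant $c_1 \geq 0$ with $\|d_{\nu_\mu}\|_\infty \leq c_1 \| \nu_\mu \|$, noting that the $E(X)$-norm and the $\Ezero(X)$-norm agree on $\MzeroX$.

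It remains to bound $|m_\mu|$ and $\|\nu_\mu\|$ by a multiple of $\|\mu\|$. The bound $|m_\mu| = |\mu(X)| \leq \|\mu\|$ is exactly part~(1). For $\|\nu_\mu\|$, the triangle inequality in the seminormed space $E(X)$ gives
\[
\| \nu_\mu \|
 \leq \| \mu \| + |m_\mu| \cdot \| \delta_{x_0} \|
 \leq \bigl( 1 + \| \delta_{x_0} \| \bigr) \| \mu \|,
\]
where $\|\delta_{x_0}\|^2 = \mbar(X) + 1 - I(\delta_{x_0}) = \mbar(X) + 1$ is a finite constant depending only on $X$. Combining these estimates yields a universal constant $c := c_1(1 + \sqrt{\mbar(X)+1}) + D(X)$ with $\|d_\mu\|_\infty \leq c\|\mu\|$ for all $\mu \in E(X)$, completing the proof. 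The only non-routine ingredient is the invocation of Theorem~\ref{2.10}(5); everything else is bookkeeping, so I do not anticipate a substantive obstacle.
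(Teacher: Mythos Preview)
Your proof is correct. Part~(1) is essentially identical to the paper's argument. For part~(2), however, the paper takes a more direct route: it observes that $d_\mu(x) = I(\mu,\delta_x) = (\mbar(X)+1)\mu(X) - \ip{\mu}{\delta_x}$, and then applies part~(1) together with the Cauchy--Schwarz inequality in $E(X)$ to obtain immediately
\[
|d_\mu(x)| \leq (\mbar(X)+1)\,\|\mu\| + \|\delta_x\|\cdot\|\mu\|
 = \bigl(\mbar(X)+1 + (\mbar(X)+1)^{1/2}\bigr)\,\|\mu\|,
\]
with an explicit constant and with no appeal to Theorem~\ref{2.10}. Your argument instead splits off a point mass to reduce to the $\Ezero(X)$ estimate of Theorem~\ref{2.10}(5), then reassembles via the triangle inequality. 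Both are valid; the paper's approach is shorter and self-contained (and yields part~(5) of Theorem~\ref{2.10} as a special case rather than relying on it), while yours is a natural modular reduction that makes explicit how the result on $E(X)$ is built from the corresponding result on $\Ezero(X)$.
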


Before proving these two theorems, we note a useful corollary and remark.

\begin{corollary}
\label{2.10.6}
Let $\Xd$ be a compact quasihypermetric space, and assume that
$\mbar(X)< \infty$. Then $\Ezero(X)$ is closed in $E(X)$.
\end{corollary}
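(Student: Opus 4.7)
The plan is to deduce this as a straightforward consequence of Theorem~\ref{2.10.5}(1). First, observe that $\Ezero(X)$, viewed as a subset of $E(X)$, is precisely the kernel of the linear functional
\[
\Lambda \colon E(X) \to \R, \qquad \Lambda(\mu) := \mu(X).
\]
Thus it suffices to show that $\Lambda$ is continuous with respect to the topology on $E(X)$ induced by its seminorm $\| \cdot \|$.

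But this is exactly what Theorem~\ref{2.10.5}(1) asserts: $|\Lambda(\mu)| = |\mu(X)| \leq \| \mu \|$ for all $\mu \in E(X)$, so $\Lambda$ is bounded and hence (since it is linear) continuous on the seminormed space $E(X)$. Therefore $\Ezero(X) = \Lambda^{-1}(\{0\})$ is a closed subspace of $E(X)$, as required.

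Concretely, if $\{ \mu_\alpha \}$ is any net in $\Ezero(X)$ converging to some $\mu \in E(X)$ in the seminorm topology, then $|\mu_\alpha(X) - \mu(X)| = |\Lambda(\mu_\alpha - \mu)| \leq \| \mu_\alpha - \mu \| \to 0$, and since $\mu_\alpha(X) = 0$ for all $\alpha$, we obtain $\mu(X) = 0$, i.e., $\mu \in \Ezero(X)$.

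There is no real obstacle here; the entire content of the corollary is absorbed by Theorem~\ref{2.10.5}(1), which provides the continuity of the mass functional. The only point worth noting is that one must have the hypothesis $\mbar(X) < \infty$ in order to invoke Theorem~\ref{2.10.5} in the first place (and indeed in order for $E(X)$ to be defined at all, since the extended semi-inner product uses the factor $\mbar(X) + 1$).
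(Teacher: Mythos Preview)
Your proof is correct and is exactly the argument the paper intends: the corollary is stated immediately after Theorem~\ref{2.10.5} with no separate proof, and the only way it follows is via part~(1), which says that the mass functional $\mu \mapsto \mu(X)$ is bounded on $E(X)$, making its kernel $\Ezero(X)$ closed. Your presentation simply spells this out.
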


\begin{remark}
\label{2.11}
If additionally $X$ is strictly quasihypermetric,
then we can reformulate Theorem~\ref{2.10} in the usual language
of normed linear spaces. Recall that each $\mu \in \MX$
defines a linear functional $J(\mu)$ on $\MX$ by
$J(\mu)(\nu) = \Imunu$ for $\nu \in \MX$.
Then for $X$ strictly quasihypermetric,
Theorem~\ref{2.10} tells us that the following conditions are equivalent.
 \begin{enumerate}
\item[(1)]
$\mbar(X)< \infty$.
\item[(2)]
$J(\mu) \colon \Ezero(X) \to \R$ is bounded for some $\mu \in \Mone(X)$.
\item[(3)]
$J(\mu) \colon \Ezero(X) \to \R$ is bounded for all $\mu \in \MX$.
\item[(4)]
$\sup \| J(\mu) \| < \infty$, where $\mu$ ranges over $\Moneplus(X)$.
\item[(5)]
The mapping $\Tzero \colon \Ezero(X) \to C(X)$ defined by
$\Tzero(\mu) = d_\mu$ for $\mu \in \Ezero(X)$ is a bounded linear operator.
\item[(6)]
The concave functional~$I$ is Lipschitz-continuous on $\Moneplus(X)$
with respect to the norm-induced metric.
\end{enumerate}
\end{remark}

We now turn to proofs of the theorems.

\begin{proof}[Proof of Theorem~\ref{2.10}]
Parts (6) and~(7) of Lemma~\ref{2.7}
imply the equivalence of conditions (2), (3) and~(4).

\smallskip
(4)~\RA~(1):
Let $\mu \in \Mone(X)$. Write $\mu$ as $\mu =
\alpha\mu_1-\beta\mu_2$, with $\alpha, \beta \geq 0$,
$\alpha-\beta = 1$ and $\mu_1, \mu_2 \in \Moneplus(X)$.
Then we have
\begin{eqnarray*}
\Imu
 & = & I(\alpha (\mu_1 - \mu_2) + \mu_2) \\
 & = & \alpha^2 I(\mu_1 - \mu_2) + 2 \alpha I(\mu_2, \mu_1 - \mu_2) + I(\mu_2).
\end{eqnarray*}
If $I(\mu_1 - \mu_2) = 0$, then by assumption we have
$|I (\mu_2, \mu_1 - \mu_2)|\leq K \, \| \mu_1-\mu_2 \| = 0$,
and so $\Imu = I (\mu_2) \leq M^+(X)$.
If $I(\mu_1 - \mu_2) < 0$, then we find
\begin{eqnarray*}
\Imu
 & = & -\| \mu_1 - \mu_2 \|^2 \alpha^2 + 2 \alpha I (\mu_2, \mu_1 - \mu_2) + I(\mu_2) \\
 & = & -\| \mu_1 - \mu_2 \|^2
       \left( \alpha - \frac{I(\mu_2, \mu_1 - \mu_2)}{\|\mu_1 - \mu_2 \|^2} \right)^2 \\
 &   & \qquad\mbox{} + \frac{I (\mu_2, \mu_1 - \mu_2)^2}{\| \mu_1 - \mu_2 \|^2} + I (\mu_2).
\end{eqnarray*}
Hence, in both cases, we have
\[
\Imu
 \leq
I \left( \mu_2, \frac{\mu_1-\mu_2}{\| \mu_1-\mu_2 \|} \right)^2 + M^+(X)
 \leq K^2 + M^+(X).
\]
Therefore, we have $\mbar(X) \leq K^2 + M^+(X) < \infty$.

\smallskip
(1)~\RA~(2):
Fix any $x \in X$, and assume that for all $n \in \N$ there exists
$\nu_n \in \Ezero(X)$ with
$|I(\delta_x, \nu_n)| > n \, \| \nu_n \|$.
Suppose that $\| \nu_n \| = 0$ for some~$n$.
By Lemma~\ref{2.7} part~(5) there exists $c \in \R$ with
$d_{\nu_n}(y) = c$ for all $y \in X$.
Since $\mbar(X) < \infty$ by assumption,
Theorem~\ref{2.9.5} implies that $c = 0$.
Hence $I(\delta_x, \nu_n) = d_{\nu_n} (x) = 0$, a contradiction.
Therefore, we have $\| \nu_n \| > 0$ for all $n \in \N$.
Now, defining $\mu_n \in \Mone(X)$ by
\[
\mu_n = \delta_x + \frac{n \sign I(\delta_x, \nu_n)}{\| \nu_n \|} \, \nu_n,
\]
we have
\[
I(\mu_n)
 = \frac{2n}{\| \nu_n \|} \bigl| I(\delta_x, \nu_n) \bigr| - n^2
 > 2n^2-n^2
 = n^2
 \to \infty
\]
as $n \to \infty$, contradicting the fact that $\mbar(X) < \infty$.
Thus we have $|I(\delta_x, \nu)| \leq c \| \nu \|$,
for some $c \geq 0$ and for all $\nu \in \Ezero(X)$.

\smallskip
(4)~\RA~(5):
By assumption, we have $|I(\delta_x, \nu)| \leq K \, \| \nu \|$,
for all $\nu \in \Ezero(X)$ and for all $x \in X$. Since
$d_\nu(x) = I(\delta_x, \nu)$ for $\nu \in \Ezero(X)$ and $x \in X$,
we are done.

\smallskip
(5)~\RA~(2):
Fixing any $x \in X$, we have
$|I(\delta_x, \nu)|
 = |d_\nu(x)| \leq \|d_\nu\|_{\infty} \leq c \, \| \nu \|$,
for all $\nu \in \Ezero(X)$.

\smallskip
(4)~\RA~(6):
Let $\mu_1, \mu_2 \in \Moneplus(X)$. Then by assumption, we have
\[
\bigl| I(\mu_1) - I(\mu_1, \mu_2) \bigr|
 = \bigl| I(\mu_1, \mu_1-\mu_2) \bigr| \leq K \, \| \mu_1- \mu_2 \|
\]
and
\[
\bigl| I(\mu_1, \mu_2)- I(\mu_2) \bigr|
 = \bigl| I(\mu_2, \mu_1 - \mu_2) \bigr| \leq K \, \| \mu_1 - \mu_2\|,
\]
and hence
$|I(\mu_1) - I(\mu_2)| \leq 2 K \, \| \mu_1 - \mu_2 \|$.

\smallskip
(6)~\RA~(1):
Let $\mu \in \Mone(X)$, and write $\mu = \alpha\mu_1 - \beta\mu_2$,
where $\alpha, \beta \geq 0$, $\alpha - \beta = 1$ and $\mu_1, \mu_2 \in \Moneplus(X)$.
Now
\begin{eqnarray*}
I (\mu)
 & = & I \left(\alpha (\mu_1 - \mu_2) + \mu_2 \right) \\
 & = & I\left( \beta (\mu_1 - \mu_2) + \mu_1 \right) \\
 & = & -\| \mu_1 - \mu_2 \|^2 \alpha^2
       + 2 \alpha I (\mu_2, \mu_1 - \mu_2) + I(\mu_2) \\
 & = & -\| \mu_1 - \mu_2 \|^2 \beta^2
       + 2 \beta I (\mu_1, \mu_1 - \mu_2) + I (\mu_1).
\end{eqnarray*}
Therefore, $I(\mu_2, \mu_1 - \mu_2) \leq 0$ implies
$\Imu \leq I (\mu_2) \leq M^+(X)$
and $I(\mu_1, \mu_1 - \mu_2) \leq 0$ implies
$\Imu \leq I (\mu_1) \leq M^+(X)$.

Now suppose that $I (\mu_2, \mu_1 - \mu_2) > 0$ and
$I (\mu_1, \mu_1 - \mu_2) > 0$.
It follows that $I (\mu_1) > I (\mu_1, \mu_2) > I (\mu_2)$.
Suppose that $\| \mu_1 - \mu_2 \| = 0$.
Now Lemma~\ref{2.7} part~(5) implies the
existence of some $\gamma \in \R$ such that
$d_{\mu_1}(x) - d_{\mu_2} (x) = \gamma$ for all $x \in X$.
Therefore, integrating, we have
$I(\mu_1) - I(\mu_1, \mu_2) = \gamma = I(\mu_1, \mu_2) - I(\mu_2)$,
which gives $I(\mu_1) - I(\mu_2) = 2 \gamma$. But by assumption
we have $|I(\mu_1) - I(\mu_2)| \leq c \| \mu_1 - \mu_2 \| = 0$,
which gives $|2 \gamma| \leq 0$, and hence $\gamma = 0$,
and it follows that $I(\mu_1) = I (\mu_1, \mu_2) = I(\mu_2)$,
a contradiction.

Hence we can assume that $I(\mu_1) > I(\mu_1, \mu_2) > I(\mu_2)$
and that $\| \mu_1 - \mu_2 \|> 0$.
Now, as in the proof of the case (4)~\RA~(1), we find
\begin{eqnarray*}
\Imu
 & \leq & \frac{I(\mu_2, \mu_1 - \mu_2)^2}{\| \mu_1 - \mu_2 \|^2} + I(\mu_2) \\
 & \leq & \frac{\bigl( I(\mu_1) - I(\mu_2) \bigr)^2}{\| \mu_1 - \mu_2 \|^2} + I(\mu_2),
\end{eqnarray*}
so by assumption we have $\Imu \leq c^2 + I(\mu_2) \leq c^2 + M^+(X)$

Therefore, in either case, we have $\mbar(X) \leq c^2 + M^+(X) < \infty$.
\end{proof}

\begin{proof}[Proof of Theorem~\ref{2.10.5}]
(1)
Consider $\mu \in E(X)$ with $\mu(X) \neq 0$. Then
\begin{eqnarray*}
\| \mu \|^2
 & = & \bigl( \mbar(X) + 1 \bigr) \mu(X)^2 - \Imu \\
 & = & \mu(X)^2 \bigl( \mbar(X) + 1 - I(\mu/\mu(X)) \bigr) \\
 & \geq & \mu(X)^2.
\end{eqnarray*}
Hence $|\mu(X)| \leq \| \mu \|$ for all $\mu \in E(X)$.

\smallskip
(2)
Let $x \in X$ and $\mu \in E(X)$. Since
$\ip{\mu}{\delta_x} = (\mbar(X) + 1) \mu(X) - I(\mu,\delta_x)$,
we have
\begin{eqnarray*}
|d_\mu(x)|
 & = & \bigl| (\mbar(X) + 1) \mu(X) - \ip{\mu}{\delta_x} \bigr| \\
 & \leq & \bigl( \mbar(X) + 1 \bigr) \|\mu\| + \|\delta_x\| \cdot \|\mu\| \\
 & = & \|\mu\| \cdot \bigl( \mbar(X) + 1 + (\mbar(X)+1)^\frac{1}{2} \bigr),
\end{eqnarray*}
and so
$\|d_\mu\|_\infty
 \leq
\|\mu\| \cdot \bigl( \mbar(X) + 1 + (\mbar(X)+1)^\frac{1}{2} \bigr)$.
\end{proof}

\begin{remark}
\label{2.12}
The constant~$c$ in part~(6) of Theorem~\ref{2.10} can be taken to be non-zero.
This is clear if $X$ is singleton. For non-trivial~$X$, suppose that $c = 0$.
Then $I(\mu_1) = I(\mu_2)$ for all $\mu_1, \mu_2 \in \Moneplus(X)$,
and since $I(\delta_x) = 0$ for all $x \in X$, we have $\Imu = 0$
for all $\mu \in \Moneplus(X)$. But for any distinct $x, y \in X$,
we have $\frac{1}{2} (\delta_x + \delta_y) \in \Moneplus(X)$,
and then we have
$I \bigl( \frac{1}{2} (\delta_x + \delta_y) \bigr) = d(x, y) = 0$,
a contradiction. Thus we can assume that $c \neq 0$.

We can therefore interpret part~(6) of the theorem as saying that
$\mbar(X) < \infty$ if and only if the following
strengthened quasihypermetric property holds:
there exists $L > 0$ such that
\[
I(\mu_1 - \mu_2) + L \cdot \bigl| I(\mu_1) - I(\mu_2) \bigr|^\frac{1}{2} \leq 0
\]
for all $\mu_1, \mu_2 \in \Moneplus(X)$.
(Note that by condition~(5) of Theorem~\ref{qhmconds},
the quasihypermetric property is equivalent to the statement that
$I(\mu_1 - \mu_2) \leq 0$ for all $\mu_1, \mu_2 \in \Moneplus(X)$.)
\end{remark}

It turns out that with the imposition of the condition that
$\mbar(X) < \infty$, the assertion of Theorem~\ref{2.19} leads to
a characterization of the strictly quasihypermetric property.

\begin{theorem}
\label{2.20}
Let $X$ be a non-trivial compact quasihypermetric space with $\mbar(X) < \infty$.
Then the following conditions are equivalent.
\begin{enumerate}
\item[(1)]
$X$ is strictly quasihypermetric.
\item[(2)]
$T$ is injective.
\item[(3)]
$\im T$ is dense in $C(X)$.
\end{enumerate}
\end{theorem}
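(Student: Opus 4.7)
The plan is to exploit the two linkages already established. Theorem~\ref{2.19} immediately gives (1)~\RA~(2) and (1)~\RA~(3), and Theorem~\ref{2.18} yields (2)~\LRA~(3) in any compact metric space whatsoever. Thus the only substantive content is a single implication --- I will prove (2)~\RA~(1) --- and this is the step where the new hypothesis $\mbar(X) < \infty$ must be used.

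For (2)~\RA~(1), I would argue by contraposition. Suppose $X$ is not strictly quasihypermetric. Then by Definition~\ref{sqhmdefn} there exists a nonzero $\mu \in \Mzero(X)$ with $\Imu = 0$, that is, $\|\mu\| = 0$ in $\Ezero(X)$. By Lemma~\ref{2.7} part~(5), this forces $d_\mu$ to be a constant function on $X$, say $d_\mu \equiv c$. The role of the finiteness hypothesis appears here: Theorem~\ref{2.9.5} part~(2) asserts that the existence of a nonzero $\mu \in \Mzero(X)$ with $d_\mu$ a nonzero constant forces $\mbar(X) = \infty$. Combined with our hypothesis $\mbar(X) < \infty$, this rules out $c \neq 0$. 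Hence $c = 0$, so $T(\mu) = d_\mu = 0$ with $\mu \neq 0$, contradicting the injectivity of~$T$.

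The key conceptual point is that Theorem~\ref{2.9.5} is precisely the machinery that converts the qualitative failure of strict quasihypermetricity into the quantitative blow-up of~$\mbar$, provided the degenerate potential $d_\mu$ is nonzero. Without this theorem the implication would fail, as Example~\ref{nonstrict} together with Example~\ref{nondense} shows that the unit circle with arc-length metric is a quasihypermetric but not strictly quasihypermetric space on which $T$ is not injective.

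The main obstacle is genuinely a non-obstacle: once Lemma~\ref{2.7}(5) and Theorem~\ref{2.9.5}(2) are available, the argument is essentially bookkeeping. The only mild subtlety worth spelling out is that Lemma~\ref{2.7}(5) hands us a constant function, not necessarily the zero function, so one must explicitly invoke the $\mbar(X) < \infty$ hypothesis through Theorem~\ref{2.9.5} to pass from \emph{constant} $d_\mu$ to $d_\mu = 0$; skipping this step would produce a gap exactly of the size witnessed by the circle counterexample.
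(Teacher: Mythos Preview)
Your proof is correct and matches the paper's own argument essentially step for step: the paper likewise cites Theorem~\ref{2.19} and Theorem~\ref{2.18} for the easy implications, and proves (2)~\RA~(1) by taking $\mu \in \Mzero(X)$ with $I(\mu)=0$, applying Lemma~\ref{2.7}(5) to get $d_\mu \equiv c$, invoking Theorem~\ref{2.9.5} together with $\mbar(X)<\infty$ to force $c=0$, and then using injectivity of~$T$ to conclude $\mu=0$. The only difference is that the paper phrases this implication directly rather than by contraposition, which is purely cosmetic.
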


\begin{proof}
(1)~\RA~(2) follows by
Theorem~\ref{2.19} and (2)~\LRA~(3) by Theorem ~\ref{2.18},
so it remains to show that (2)~\RA~(1).
Let $\Imu = 0$ for some $\mu \in \Mzero (X)$. Then part~(5) of
Lemma~\ref{2.7} gives us $c \in \R$ such that $d_\mu(x) = c$
for all $x \in X$. But by Theorem~\ref{2.9.5} we get $c = 0$,
since $\mbar(X) < \infty$. Hence $d_\mu = 0$, and therefore,
using the injectivity of~$T$, we have $\mu = 0$.
\end{proof}

\begin{remark}
\label{2.21}
The condition $\mbar(X) < \infty$ is necessary in Theorem~\ref{2.20}.
In Theorem~5.4 of~\cite{NW2}, we shall construct a space~$X$
which is quasihypermetric but not strictly quasihypermetric
and has $\mbar(X) = \infty$, but for which
it is easy to check that $T$ is injective.
\end{remark}

Consider the interval $[a, b]$ in~$\R$, with its usual metric.
For each $c \in [a, b]$, we clearly have $d_{\delta_c}(x) = |x - c|$
for all $x \in [a, b]$. It is straightforward to confirm
that the linear span of these functions in $C([a, b])$ is exactly
the subspace of piecewise linear continuous functions,
which is dense in $C([a, b])$,
and it follows that $\im T$ is dense in $C([a, b])$.
Since $\mbar ([a, b]) = (b - a)/2 < \infty$
(see Lemma~3.5 of~\cite{AandS} or Corollary~3.2 of~\cite{NW2}),
we have the following:

\begin{corollary}
\label{2.22}
Every compact subset of\/ $\R$ with the usual metric is strictly quasihypermetric.
\end{corollary}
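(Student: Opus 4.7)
The plan is to reduce the general case to the case of a closed interval, which has already been handled in the paragraph preceding the corollary. First I would observe that any compact $X \subseteq \R$ is contained in an interval $[a,b]$ (for instance, $a = \min X$ and $b = \max X$), and that by Theorem~\ref{2.20} together with the density of $\im T$ in $C([a,b])$ and the fact $\mbar([a,b]) = (b-a)/2 < \infty$, the interval $[a,b]$ is strictly quasihypermetric.

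Next I would exploit the fact that strict quasihypermetricity passes to compact subspaces. Given $\mu \in \Mzero(X)$ with $I(\mu) = 0$, I would define the extension $\widetilde{\mu} \in \M([a,b])$ by $\widetilde{\mu}(B) = \mu(B \cap X)$ for every Borel set $B \subseteq [a,b]$. Since $X$ is closed in $[a,b]$, this is a legitimate finite signed Borel measure on $[a,b]$ with $\widetilde{\mu}([a,b]) = \mu(X) = 0$, so $\widetilde{\mu} \in \Mzero([a,b])$. Moreover, because the metric $d(x,y) = |x-y|$ is the same on both spaces and $\widetilde{\mu}$ assigns no mass outside $X$, one has
\[
I_{[a,b]}(\widetilde{\mu}) = \int_{[a,b]} \!\! \int_{[a,b]} |x - y| \, d\widetilde{\mu}(x) d\widetilde{\mu}(y) = \int_X \!\! \int_X d(x,y) \, d\mu(x) d\mu(y) = I_X(\mu) = 0.
\]

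Applying the strict quasihypermetric property of $[a,b]$ yields $\widetilde{\mu} = 0$, and hence $\mu = \widetilde{\mu}|_X = 0$. This gives the conclusion that $X$ is strictly quasihypermetric.

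There is essentially no hard step here: the only thing one needs to verify is that the extension-by-zero of a signed Borel measure on a closed subset of a compact metric space remains a signed Borel measure, and that the energy integral is preserved under the extension, both of which are routine. The substance of the argument has already been accomplished in the derivation that $[a,b]$ itself is strictly quasihypermetric via Theorems~\ref{2.10} and~\ref{2.20}.
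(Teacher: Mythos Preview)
Your proof is correct and follows essentially the same route as the paper: the paragraph preceding the corollary establishes that $[a,b]$ is strictly quasihypermetric (via density of $\im T$ in $C([a,b])$ and Theorem~\ref{2.20}), and the corollary is then stated for arbitrary compact subsets without further comment. Your argument makes explicit the step the paper leaves tacit---that strict quasihypermetricity is inherited by compact subspaces via the extension-by-zero of measures---which is a welcome clarification rather than a departure.
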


We noted earlier (see Theorem~\ref{Rnstrict}) the fact
that each compact subset~$X$ of $\R^n$ is strictly quasihypermetric
for all~$n$. By Theorem~\ref{2.20}, this is equivalent to the fact
that $\im T$ is dense in $C(X)$ for each such~$X$. The fact that
the latter statement holds is a fundamental result in the theory
of radial basis functions; see \cite[Theorem~B.1]{Pow1}.

\section{Completeness}
\label{completeness}

We now address the question of the completeness of the spaces
$\Ezero(X)$ and $E(X)$, under the assumption that $\mbar(X)$ is finite.
Recall that the semi-norms on $\Ezero(X)$ and $E(X)$ become norms
precisely when $X$ is strictly quasihypermetric.

Our main result is the following (cf.~\cite[Theorem~1.19]{Lan}).

\begin{theorem}
\label{2.31}
Let $\Xd$ be a compact quasihypermetric space with $\mbar(X) < \infty$.
Then the semi-inner product space $\Ezero(X)$ is complete
if and only if $X$ is finite.
\end{theorem}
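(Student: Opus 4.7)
The forward direction is routine: when $X$ is finite, $\MzeroX$ is a finite-dimensional subspace of $\MX$, and every finite-dimensional semi-inner product space is complete. For the converse I prove the contrapositive: assuming $X$ is infinite, I construct a Cauchy sequence in $\Ezero(X)$ with no limit. The plan rests on two ingredients. First, Theorem~\ref{2.10.5}~(2) forces any hypothetical limit $\nu \in \MzeroX$ of a Cauchy sequence $\{\mu_n\}$ in $\Ezero(X)$ to satisfy $d_{\mu_n} \to d_\nu$ uniformly on~$X$. Second, the elementary Lipschitz bound $|d_\nu(y) - d_\nu(z)| \leq \|\nu\|_\M\,d(y,z)$ holds for every $\nu \in \MX$ (by bringing the absolute value inside the integral and using the reverse triangle inequality for~$d$). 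The strategy is to build $\{\mu_n\}$ so that $d_{\mu_n}$ converges pointwise to a function that is not Lipschitz, forcing a contradiction.

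The construction exploits $\|\delta_x - \delta_y\|^2 = 2\,d(x,y)$, which makes pairs of near-neighbours cheap in seminorm. Since $X$ is infinite and compact it contains a non-isolated point~$x$; pick distinct $x_k \to x$ and, passing to a subsequence, arrange $\sqrt{c_k} = o(k^{-2})$, where $c_k := d(x, x_k)$. With $\beta_k := k^{-2}/\sqrt{c_k}$ (so $\beta_k \to \infty$), set
\[
\mu_n := \sum_{k=1}^n \beta_k(\delta_{x_k} - \delta_x) \in \MzeroX.
\]
The seminorm triangle inequality together with $\|\beta_k(\delta_{x_k} - \delta_x)\| = \sqrt{2}\,k^{-2}$ yields $\|\mu_n - \mu_m\| \leq \sqrt{2}\sum_{k=m+1}^n k^{-2}$, so $\{\mu_n\}$ is Cauchy.

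Now suppose $\mu_n \to \nu$ in $\Ezero(X)$. Then $d_{\mu_n} \to d_\nu$ uniformly, while the absolutely convergent series $g(y) := \sum_{k \geq 1} \beta_k(d(x_k, y) - d(x, y))$ (each term bounded by $\beta_k c_k = k^{-2}\sqrt{c_k}$) gives the pointwise limit of $d_{\mu_n}(y)$, so $d_\nu = g$. Evaluating at $x$ and $x_m$, the $k = m$ term in $g(x) - g(x_m)$ contributes $2\beta_m c_m$, and all other terms $\beta_k(c_k + c_m - d(x_k, x_m))$ are nonnegative by the triangle inequality, so $g(x) - g(x_m) \geq 2\beta_m c_m$, giving $|g(x) - g(x_m)|/d(x, x_m) \geq 2\beta_m \to \infty$. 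This contradicts the Lipschitz bound on~$d_\nu$. The main obstacle is the balancing act: $\{\mu_n\}$ must be Cauchy, which constrains $\sum \beta_k \sqrt{c_k}$ to be finite, while $g$ must fail to be Lipschitz, which forces $\beta_k \to \infty$; the scaling $\beta_k \sim 1/\sqrt{c_k}$ exploits the square-root gap between these requirements to satisfy both.
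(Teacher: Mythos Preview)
Your proof is correct and takes a genuinely different route from the paper's argument. The paper proceeds abstractly: it shows that the induced operator $\Ttildezero \colon \Ezero(X)/F \to C(X)$ is compact (via Arzel\`a--Ascoli, using the equicontinuity estimate $|d_\nu(x)-d_\nu(y)| \leq \|\nu\|\,(2d(x,y))^{1/2}$ on the unit ball), observes that its adjoint $\Ttildezero'$ is then also compact, and argues that if $\Ezero(X)/F$ is complete then $\Ttildezero'$ is surjective onto $(\Ezero(X)/F)'$ by Riesz representation; a compact operator with complete range has finite-dimensional range, so $\dim \Ezero(X)/F < \infty$, and an appeal to Theorem~\ref{newtheorem} finishes. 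Your argument bypasses all of this machinery---compact operators, the quotient by~$F$, and the somewhat delicate Theorem~\ref{newtheorem}---in favour of an explicit Cauchy sequence whose would-be limit is ruled out by the tension between the $\|\cdot\|_\M$-Lipschitz bound on~$d_\nu$ and the chosen scaling $\beta_k \to \infty$. What your approach buys is directness and self-containment: the only external input is the bound $\|d_\nu\|_\infty \leq c\,\|\nu\|$ from Theorem~\ref{2.10}~(5). What the paper's approach buys is structural insight (it isolates the role of the compact operator~$\Ttildezero$ and the quotient $\Ezero(X)/F$, yielding Lemma~\ref{2.32} and Corollary~\ref{completecoroll} along the way), and it works uniformly without having to balance competing growth conditions by hand.
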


For the proof, we need the following result.
(Recall that $\Tzero \colon \MzeroX \to C(X)$ is
the restriction of the linear map~$T$ to the subspace $\MzeroX$.
Also recall that for $\mu \in \MX$, the functional $J(\mu)$ is defined by
$J(\mu)(\nu) = I(\mu , \nu)$ for $\nu \in \Ezero(X)$.)

\begin{lemma}
\label{2.32}
Let $\Xd$ be a compact quasihypermetric space with $\mbar(X) < \infty$.
Then we have the following.
\begin{enumerate}
\addtolength{\itemsep}{1mm}
\item[(1)]
The operator $\Ttildezero \colon \Ezero(X)/F \to C(X)$
defined by $\Ttildezero(\mu + F) = \Tzero(\mu)$
for $\mu \in \Ezero(X)$ is well defined and compact.
\item[(2)]
The adjoint operator $\Ttildezero' \colon \MX \to (\Ezero(X)/F)'$
is given by $\Ttildezero'(\mu) (\nu + F) = -\ip{\mu}{\nu}$
for all $\mu$ in $\MX$ and $\nu \in \Ezero(X)$.
\item[(3)]
$\dim \Ezero(X)/F < \infty$ if and only if $X$ is finite.
\item[(4)]
$\Ezero(X)/F$ is complete if and only if $\dim \Ezero(X)/F < \infty$.
\end{enumerate}
\end{lemma}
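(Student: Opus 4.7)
The plan is to dispatch parts~(1)--(3) quickly using operator-theoretic routine together with results already proved, then focus the real work on part~(4), which requires assembling Riesz representation, the open mapping theorem, and the compactness established in part~(1).

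For part~(1), well-definedness of $\Ttildezero$ will follow from the observation that if $\mu \in F$ then $d_\mu$ is a constant function by Lemma~\ref{2.7}(5); since $\mu \in \MzeroX$ and $\mbar(X) < \infty$, Theorem~\ref{2.9.5} forces this constant to be zero, so $\Tzero$ vanishes on~$F$. For the compactness of $\Ttildezero$, the intent is to apply Arzel\`a--Ascoli to the image of the unit ball of $\Ezero(X)/F$: uniform boundedness is Theorem~\ref{2.10}(5), and the equicontinuity estimate is obtained from the identity $d_\nu(x) - d_\nu(y) = -\bigip{\delta_x - \delta_y}{\nu}$ followed by Cauchy--Schwarz, yielding
\[
|d_\nu(x) - d_\nu(y)| \le \|\delta_x - \delta_y\| \cdot \|\nu\| = \sqrt{2\, d(x,y)}\, \|\nu\|.
\]
Part~(2) will be a direct unwinding of the adjoint: $\Ttildezero'(\mu)(\nu + F) = \int_X d_\nu\, d\mu = \Imunu$ for $\mu \in \MX$ and $\nu \in \Ezero(X)$, and on the subspace $\MzeroX$ the extended inner product on $E(X)$ collapses to $\ip{\mu}{\nu} = -\Imunu$ because $\nu(X) = 0$.

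For part~(3), the direction ``$X$ finite $\Rightarrow \dim \Ezero(X)/F < \infty$'' is immediate. For the converse, the key observation is that $\im \Ttildezero = \im \Tzero$, so finite-dimensionality of $\Ezero(X)/F$ gives $\dim(\im \Tzero) < \infty$. Then, fixing some $x_0 \in X$ and decomposing an arbitrary $\mu \in \MX$ as $\mu = (\mu - \mu(X)\delta_{x_0}) + \mu(X)\delta_{x_0}$ shows that $\im T \subseteq \im \Tzero + \R \cdot d_{\delta_{x_0}}$, so $\dim(\im T) < \infty$, at which point Theorem~\ref{newtheorem} forces $X$ to be finite.

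Part~(4) is the crux, and its easy direction is automatic. For the hard direction, the strategy is: assume $\Ezero(X)/F$ is complete, so by Lemma~\ref{2.7}(4) it is a Hilbert space. The Riesz representation theorem then lets us express every continuous linear functional on $\Ezero(X)/F$ in the form $\nu + F \mapsto (\mu + F \mid \nu + F) = -\Imunu$ for some $\mu \in \MzeroX$, which by part~(2) is exactly $-\Ttildezero'(\mu)$. Consequently $\Ttildezero'$ restricted to $\MzeroX$ surjects onto $(\Ezero(X)/F)'$. This restriction is compact (Schauder's theorem applied to the compact operator $\Ttildezero$ from part~(1)), so we have a compact surjection between Banach spaces; by the open mapping theorem the image of the unit ball contains a neighbourhood of $0$ in the target, but is also relatively compact, so the unit ball of $(\Ezero(X)/F)'$ is relatively compact, and Riesz's lemma then forces $(\Ezero(X)/F)'$ to be finite-dimensional, whence so is $\Ezero(X)/F$. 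The main obstacle will be exactly this last argument: recognising that completeness is precisely what activates the Riesz representation theorem, which is what converts the abstract ``Hilbert space'' structure into the concrete surjectivity statement needed to feed into the compact-plus-open-mapping engine.
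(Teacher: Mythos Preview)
Your proposal is correct and follows essentially the same route as the paper's proof: parts~(1)--(3) match the paper's arguments almost line for line (well-definedness via Lemma~\ref{2.7}(5) and Theorem~\ref{2.9.5}, Arzel\`a--Ascoli for compactness, the decomposition $\mu = (\mu - \mu(X)\delta_{x_0}) + \mu(X)\delta_{x_0}$ for part~(3)), and in part~(4) both arguments hinge on Riesz representation to show $\Ttildezero'$ surjects onto $(\Ezero(X)/F)'$. The only difference is cosmetic: the paper cites a textbook result (a compact operator whose range is a complete space has finite-dimensional range), whereas you unpack that fact explicitly via the open mapping theorem and Riesz's lemma. One small wording slip in part~(2): the collapse $\ip{\mu}{\nu} = -\Imunu$ holds for \emph{all} $\mu \in \MX$, not just $\mu \in \MzeroX$, and the reason is exactly the one you give, namely $\nu(X) = 0$.
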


\begin{proof}
(1)
Suppose that $\mu_1 + F = \mu_2 + F$
for some $\mu_1, \mu_2 \in \Ezero(X)$.
Then $\mu_1 -\mu_2 \in F$, and by Lemma~\ref{2.7} part~(5)
and Theorem~\ref{2.9.5}, we conclude that $d_{\mu_1-\mu_2} \equiv 0$,
and so $\Ttildezero(\mu_1 + F) = \Ttildezero(\mu_2 + F)$.

Let $B = \bigl\{ \nu + F \in \Ezero(X)/F : \| \nu + F \| \leq 1 \bigr\}$.
For $\nu + F \in B$, we have
\begin{eqnarray*}
\bigl| \Ttildezero(\nu + F)(x) - \Ttildezero(\nu + F)(y) \bigr|
 & = & \bigl| d_\nu(x) - d_\nu(y) \bigr| \\
 & = & \bigl| \ip{\nu}{\delta_x - \delta_y} \bigr| \\
 & \leq & \| \nu \| \cdot \| \delta_x - \delta_y \| \\
 & = & \| \nu + F \| \cdot \bigl(2d(x, y)\bigr)^{\frac{1}{2}} \\*
 & \leq & \bigl(2d(x, y)\bigr)^{\frac{1}{2}},
\end{eqnarray*}
for all $x, y \in X$.

By Theorem~\ref{2.10} part~(5), we have, for each $\nu + F \in B$,
\[
\| \Ttildezero(\nu + F) \|_\infty
 = \| \Tzero(\nu) \|_\infty
 = \| d_\nu \|_\infty
 \leq c \| \nu \|
 = c \| \nu + F \|
 \leq c,
\]
for some constant~$c$.
The Arzel{\`a}-Ascoli Theorem now implies
that $\Ttildezero(B)$ is relatively compact
in $C(X)$, and therefore $\Ttildezero$ is compact.

\smallskip
(2)
By definition,we have
$\Ttildezero'(\mu)(\nu + F)
 = \mu(\Ttildezero(\nu + F))
 = \mu(\Tzero(\nu))
 = \mu(d_\nu)
 = \Imunu
 = -\ip{\mu}{\nu}$
for all $\mu \in \MX$ and $\nu \in \Ezero(X)$.

\smallskip
(3)
Of course, if $X$ is finite, then $\dim \Ezero(X)/F < \infty$,
so let us assume that $\dim \Ezero(X)/F = n$ for some natural number~$n$
(note that \mbox{$n = 0$} obviously implies that $X$ is a one-point space).
Thus there are $\mu_1, \ldots, \mu_n \in \Ezero(X)$ such that
\[
\Ezero(X)/F = [ \mu_1 + F, \ldots, \mu_n + F ].
\]
Now consider $\mu \in \Ezero(X)$.
Then there exist $\alpha_1, \ldots, \alpha_n \in \R$ such that
\[
\mu + F = \alpha_1(\mu_1 + F) + \cdots + \alpha_n(\mu_n + F),
\]
and we have $\mu - \sum_{i=1}^n \alpha_i \mu_i \in F$.
By Lemma~\ref{2.7} part~(5) and Theorem~\ref{2.9.5},
it follows that $d_{\mu - \sum_{i=1}^n \alpha_i \mu_i} = 0$.
Therefore, $d_\mu \in [ d_{\mu_1}, \ldots, d_{\mu_n} ]$,
and we conclude that $\im \Tzero = [ d_{\mu_1}, \ldots, d_{\mu_n} ]$.
But $\im T = [ \im \Tzero, d_{\delta_x} ]$
for each fixed $x \in X$, since
$d_\nu = d_{\nu-\nu(X)\delta_x} + \nu(X) \cdot d_{\delta_x}$
for each $\nu \in \M(X)$,
and so $\dim(\im T) < \infty$. Therefore,
$X$ is finite, by Theorem~\ref{newtheorem}.

\smallskip
(4)
Clearly $\Ezero(X)/F$ is complete if $\dim \Ezero(X)/F < \infty$,
so let us assume that $\Ezero(X)/F$ is complete.
The Riesz representation theorem, with Lemma~\ref{2.32} part~(2),
implies that $(\Ezero(X)/F)' = \Ttildezero'(\Mzero(X))$,
since
$\Ttildezero'(\mu)(\nu + F)
 = -\ip{\mu}{\nu}
 = \ip{-\mu + F}{\nu + F}$
for all $\mu, \nu \in \Ezero(X)$.
Therefore, $\Ttildezero' \colon \M(X) \to (\Ezero(X)/F)'$
is compact, since $\Ttildezero$ is compact by part~(1),
and $\im \Ttildezero' = (\Ezero(X)/F)'$, which is by assumption complete.

But it is well known (see for example Theorem~7.4 in~\cite{TayLay})
that this situation implies that $\im \Ttildezero'$
is of finite dimension, and hence $(\Ezero(X)/F)'$
is of finite dimension.
Therefore, $\Ezero(X)/F$ is of finite dimension,
and so $X$ is finite, by part~(3).
\end{proof}

\begin{corollary}
\label{completecoroll}
With the hypotheses of the lemma, $\Ezero(X)/F$ is complete
if and only if $X$ is finite.
\end{corollary}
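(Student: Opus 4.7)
The plan is essentially immediate from the preceding Lemma~\ref{2.32}, which has already done all the substantive work. I would simply chain together parts~(3) and~(4) of that lemma: part~(4) gives the equivalence that $\Ezero(X)/F$ is complete if and only if $\dim \Ezero(X)/F < \infty$, and part~(3) gives the equivalence that $\dim \Ezero(X)/F < \infty$ if and only if $X$ is finite. Composing these two equivalences yields exactly the assertion of the corollary.

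There is no real obstacle, since the difficult content has already been absorbed into Lemma~\ref{2.32}: the compactness of $\Ttildezero$ (built on Theorem~\ref{2.10}(5) and the Arzel\`a--Ascoli theorem), the identification of the adjoint $\Ttildezero'$ via the Riesz representation theorem, and the classical fact (cited from \cite{TayLay}) that a compact operator with complete and closed range must have finite-dimensional range. The corollary is thus little more than a repackaging that eliminates the intermediate reference to the dimension of the quotient and states the completeness criterion directly in terms of the cardinality of~$X$.

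Accordingly, the proof I would write is a single short paragraph of the form: by Lemma~\ref{2.32}(4), completeness of $\Ezero(X)/F$ is equivalent to $\dim\Ezero(X)/F < \infty$, and by Lemma~\ref{2.32}(3) this in turn is equivalent to the finiteness of~$X$, which concludes the argument. The only mild point worth noting is that the standing hypothesis $\mbar(X) < \infty$ is needed precisely to invoke Lemma~\ref{2.32}, so the corollary inherits that hypothesis automatically.
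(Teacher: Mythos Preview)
Your proposal is correct and matches the paper's approach exactly: the paper gives no separate proof for this corollary, treating it as immediate from parts~(3) and~(4) of Lemma~\ref{2.32}, just as you describe.
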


\begin{proof}[Proof of Theorem~\ref{2.31}]
If $X$ is finite, the required conclusion is trivial,
so let us assume that $\Ezero(X)$ is complete.
Let $(\mu_n + F)_{n\geq1}$ be a Cauchy sequence in $\Ezero(X)/F$,
where $\mu_n \in \Ezero(X)$ for all~$n$.
Since
$\| \mu_n - \mu_m \| = \| (\mu_n + F) - (\mu_m + F) \|$
for all $n$ and~$m$, we conclude that $(\mu_n)_{n\geq1}$
is a Cauchy sequence in $\Ezero(X)$, and hence, by assumption,
there exists $\mu \in \Ezero(X)$ (not necessarily unique)
such that $\| \mu_n - \mu \| \to 0$ as $n \to \infty$.
Hence $\| (\mu_n + F) - (\mu + F) \| \to 0$ as $n \to \infty$.
Therefore, $\Ezero(X)/F$ is complete,
and hence, by Corollary~\ref{completecoroll}, $X$ is finite.
\end{proof}

\begin{corollary}
\label{completecoroll2}
Let $\Xd$ be a compact strictly quasihypermetric space
with $\mbar(X) < \infty$. Then the inner product space $\Ezero(X)$
is a Hilbert space if and only if $X$ is finite.
\end{corollary}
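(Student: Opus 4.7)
The plan is to derive this corollary essentially as an immediate specialization of Theorem~\ref{2.31}. The key observation is that the two hypotheses of the corollary---strictly quasihypermetric and $\mbar(X) < \infty$---exactly match what is needed to upgrade the general statement of Theorem~\ref{2.31} from ``complete semi-inner product space'' to ``Hilbert space.''

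First I would recall from the discussion following Theorem~\ref{qhmconds} and from section~\ref{topologies} that the semi-inner product $(\mu \mid \nu) = -\Imunu$ on $\Mzero(X)$ is in fact a genuine inner product precisely when $X$ is strictly quasihypermetric: this is by the very definition of strictly quasihypermetric, since $(\mu\mid\mu) = -\Imu = 0$ then forces $\mu = 0$ for $\mu \in \MzeroX$. Consequently, under the corollary's hypothesis, $\Ezero(X)$ is an inner product space whose induced seminorm is actually a norm.

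Next, since ``Hilbert space'' means exactly ``complete inner product space,'' the corollary reduces to: $\Ezero(X)$ is complete if and only if $X$ is finite. This is precisely the content of Theorem~\ref{2.31}, whose hypothesis (compact quasihypermetric with $\mbar(X) < \infty$) is implied by ours. Applying it therefore gives both directions at once: if $X$ is finite, $\Ezero(X)$ is finite-dimensional and hence complete, so a Hilbert space; conversely, if $\Ezero(X)$ is Hilbert, it is in particular complete, so $X$ must be finite.

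There is no real obstacle here---the work has all been done in Theorem~\ref{2.31} and in the earlier identification of when the semi-inner product becomes an inner product. The proof is essentially a two-line deduction.
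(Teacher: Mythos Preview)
Your proposal is correct and matches the paper's approach exactly: the paper states this corollary without proof, as it follows immediately from Theorem~\ref{2.31} together with the observation (made in section~\ref{topologies}) that the semi-inner product on $\Ezero(X)$ is a genuine inner product precisely when $X$ is strictly quasihypermetric.
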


Finally, we apply an earlier result to extend Theorem~\ref{2.31}
to the space $E(X)$. Indeed, by Corollary~\ref{2.10.6},
$\Ezero(X)$ is closed in $E(X)$ when $\mbar(X) < \infty$,
so the completeness of $E(X)$ would imply the completeness
of $\Ezero(X)$, and we therefore have the following.

\begin{corollary}
\label{hilbert}
Let $\Xd$ be a compact quasihypermetric space with $\mbar(X) < \infty$.
Then the semi-inner product space $E(X)$ is complete
if and only if $X$ is finite.
\end{corollary}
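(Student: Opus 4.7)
The plan is to reduce Corollary~\ref{hilbert} directly to Theorem~\ref{2.31}, using Corollary~\ref{2.10.6} as the bridge, exactly as sketched in the paragraph immediately preceding the statement.

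For the easy direction, if $X$ is finite then $\MX$ is a finite-dimensional real vector space, so $E(X)$ is a finite-dimensional semi-inner product space. Any such space is automatically complete: its associated normed quotient (by the null space of the seminorm) is a finite-dimensional normed space and hence Banach, and completeness of a semi-normed space is equivalent to completeness of this quotient.

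For the substantive direction, I assume $E(X)$ is complete and aim to conclude that $\Ezero(X)$ is complete, so that Theorem~\ref{2.31} forces $X$ to be finite. Since $\mbar(X) < \infty$, Corollary~\ref{2.10.6} gives that $\Ezero(X)$ is closed in $E(X)$. Now I invoke the standard fact that a closed linear subspace of a complete semi-normed space is itself complete: given any Cauchy sequence $(\mu_n)$ in $\Ezero(X)$, it is Cauchy in $E(X)$, so by hypothesis there exists $\mu \in E(X)$ with $\|\mu_n - \mu\| \to 0$; because $\Ezero(X)$ is closed in $E(X)$, the limit $\mu$ lies in $\Ezero(X)$, witnessing the convergence there. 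Applying Theorem~\ref{2.31} to the now-complete $\Ezero(X)$ yields that $X$ is finite.

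There is essentially no serious obstacle. The only point requiring a moment's care is that we are working with semi-inner product (rather than inner product) spaces, so that limits of Cauchy sequences need not be unique. This however plays no role in the argument above, which relies only on the existence of a limit in $E(X)$ and the fact that any such limit must lie in the closed subset $\Ezero(X)$. Consequently the proof is a short, direct citation of Corollary~\ref{2.10.6} and Theorem~\ref{2.31}, with no new ideas required beyond what has already been established.
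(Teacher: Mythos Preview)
Your proof is correct and follows exactly the approach the paper takes: invoke Corollary~\ref{2.10.6} to see that $\Ezero(X)$ is closed in $E(X)$, deduce completeness of $\Ezero(X)$ from that of $E(X)$, and apply Theorem~\ref{2.31}. Your remark about non-uniqueness of limits in the semi-normed setting is apt and handled correctly.
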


There is also of course a result paralleling Corollary~\ref{completecoroll2}
for $E(X)$ in the strictly quasihypermetric case.

\providecommand{\bysame}{\leavevmode\hbox to3em{\hrulefill}\thinspace}

\end{document}